\newcommand{\arxiv}[2][]{\ifthenelse{\equal{#1}{}}
{\href{http://arxiv.org/abs/#2}{\tt arXiv:#2}}
{\href{http://arxiv.org/abs/math/#2}{\tt arXiv:math.#1/#2}}}
\theoremstyle{plain}
\newtheorem{theorem}{Theorem}[section]
\newtheorem{lemma}[theorem]{Lemma}
\newtheorem{corollary}[theorem]{Corollary}
\newtheorem{proposition}[theorem]{Proposition}
\newtheorem{maintheorem}{Theorem}
\newtheorem{mainconjecture}{Conjecture}
\newtheorem{mainproblem}{Problem}
\theoremstyle{definition}
\newtheorem{example}[theorem]{Example}
\newtheoremstyle{remark}
{}{}{}{}{\itshape}{}{ }{\thmname{#1}\thmnumber{ \itshape #2.}}
\theoremstyle{remark}
\newtheorem{remark}[theorem]{Remark}
\newtheoremstyle{concise}
{}{}{}{}{\bfseries}{}{ }{\thmnumber{#2.}\thmnote{ #3.}}
\theoremstyle{concise}
\newenvironment{embedded roster}[1][0]
{

\begin{enumerate}\setcounter{enumii}{#1}}
{\end{enumerate}}
\renewcommand\p@enumii{\p@enumi}
\renewcommand{\@listii}{\leftmargin=40pt}
\def\N{\mathbb{N}} 
\def\R{\mathbb{R}} 
\def\Z{\mathbb{Z}} 
\def\Q{\mathbb{Q}}
\def\I{\mathcal{I}}
\def\x{\times}
\def\but{\setminus} 
\def\emb{\hookrightarrow} 
\def\eps{\varepsilon} 
\def\phi{\varphi} 
\def\invlim{\lim\nolimits} 
\def\xr#1{\xrightarrow{#1}} 
 \renewcommand{\:}{\colon}
\DeclareMathOperator{\Fg}{fg} 
\DeclareMathOperator*{\colim}{colim} 
\DeclareMathOperator*{\derlim}{lim^1} 
\def\fg{{\Fg}}
\DeclareMathOperator{\im}{im} \DeclareMathOperator{\coker}{coker}
\DeclareMathOperator{\Int}{Int} 
 \DeclareMathOperator{\Cl}{Cl} 
 \DeclareMathOperator{\Hom}{Hom} 
 \DeclareMathOperator{\Ext}{Ext} 
 \DeclareMathOperator{\cyl}{cyl} 
\def\invlimfg{\lim\nolimits_\fg} 
\def\derlimfg{\lim\nolimits^1_\fg}
\def\tph#1{\raise2.5pt\hbox{\the\textfont1\char"7F}\!\!#1}
\def\tpm#1{\raise0pt\hbox{\the\textfont1\char"7F}\!#1}
\def\tpl#1{\lower1.5pt\hbox{\the\textfont1\char"7F}\!#1}
\def\bydef{\mathrel{\mathop:}=}
\begin{document}

\title{Lim colim versus colim lim. I}
\author{Sergey A. Melikhov}
%\date{\today}
\address{Steklov Mathematical Institute of Russian Academy of Sciences,
ul.\ Gubkina 8, Moscow, 119991 Russia}
\email{melikhov@mi-ras.ru}
%\thanks{Supported by ...}

\begin{abstract}
We study a model situation in which direct limit ($\colim$) and inverse limit ($\lim$) do not commute,
and offer some computations of their ``commutator''.

The homology (i.e.\ the Steenrod--Sitnikov homology) of a separable metrizable space $X$ has two well-known 
approximants: $qH_n(X)$ (``\v Cech homology'') and $pH_n(X)$ (``\v Cech homology with compact supports''), 
which are not homology theories but are nevertheless interesting as they are $\lim\,\colim$ and $\colim\,\lim$ 
applied to homology of finite simplicial complexes.
The homomorphism $pH_n(X)\xr{\tau_X} qH_n(X)$, which is a special case of the natural map 
$\colim\,\lim\to\lim\,\colim$,
need not be either injective (P. S. Alexandrov, 1947) or surjective (E. F. Mishchenko, 1953), but its surjectivity
for locally compact $X$ remains an open problem. 
In the case $n=0$ we obtain an affirmative solution of this problem.

For locally compact $X$, the dual map in cohomology $pH^n(X)\to qH^n(X)$ is shown to be surjective and
its kernel is computed, in terms of $\lim^1$ and a new functor $\lim^1_{\text{fg}}$.
The original map $\tau_X$ is surjective and its kernel is computed when $X=S^m\but Y$, where $Y$ is locally compact,
and more generally when $X$ is a ``coronated polyhedron'', i.e.\ contains a compactum whose complement is a polyhedron.

%We also show that $\tau_X$ is surjective for locally compact $X$ assuming (i) two assertions 
%known to be consistent with ZFC and (ii) that all locally compact spaces $X$ satisfy $\lim^2 H_{n+1}(N_\alpha)=0$, 
%where $N_\alpha$ runs over the nerves of all open covers of $X$.
\end{abstract}

\maketitle
%\tableofcontents
\section{Introduction}

The use of $\lim^1$ (and in extreme cases also $\lim^2,\lim^3,\dots$) provides a reasonable description of
limiting behavior in homology and cohomology for metric simplicial complexes (or for CW complexes) and, 
on the other hand, for compact spaces.
In contrast, homology and cohomology (even ordinary) of non-triangulable non-compact spaces have been quite poorly 
understood until recently, due to the lack of any clues on how direct limits interact with inverse limits. 
Some recent progress in algebraic topology of metrizable spaces \cite{M-I}, \cite{M-Ia}, \cite{M-II} is based to
a large extent on certain forms of commutativity between direct and inverse limits.
This commutativity is quite literal in \cite{M-II}, where the axiomatic (co)homology of a Polish space is 
expressed as the (co)homology of a certain cellular (co)chain complex, based on (co)chains ``that are finite in 
one direction but possibly infinite in the other direction''.
Two natural approaches to describing such ``semi-infinite'' (co)chains: as a direct limit of inverse limits and
as an inverse limit of direct limits --- turn out to lead to the same answer.
A non-abelian version of this commutativity phenomenon (with homotopy in place of homology) is the starting point 
in the development of fine shape \cite{M-I}.

The present series ``lim colim versus colim lim'' is devoted to forms of commutativity (and measurement of 
non-commutativity) between direct and inverse limits in a specific model situation, which is of interest 
for algebraic topology of separable metrizable spaces.%
\footnote{Thus we do not intend to cover a general algebraic setting here.
The reader who is interested to see examples of how direct and inverse limits of abelian groups commute 
and don't commute is invited to look at \cite{M00}*{\S\ref{book:prod-lim} and \S\ref{book:colimlim-limcolim}}, 
which are just about that.
Closely related (purely algebraic) issues are also addressed in \cite{M-Ia}.}

Namely, given a metrizable space $X$,
\begin{enumerate}
\item[($\alpha\beta$)] we may first approximate $X$ by compacta $K_\alpha$ and then approximate each $K_\alpha$ by 
finite simpicial complexes $Q_{\alpha\beta}$ (that is, the $K_\alpha$ are all compact subsets of $X$ and
the $Q_{\alpha\beta}$ are the nerves of all open covers of $K_\alpha$); or else
\item[($\beta\alpha$)] we may first approximate $X$ by simplicial complexes $P_\beta$ and then approximate each 
$P_\beta$ by finite simplicial complexes $Q_{\alpha\beta}$ (that is, the $P_\beta$ are the nerves of 
all open covers of $X$ and the $Q_{\alpha\beta}$ are all finite subcomplexes of $P_\beta$).
\end{enumerate}
This leads, in particular, to natural homomorphisms
\[\colim_\alpha\lim_\beta H_n(Q_{\alpha\beta})\xr{\tau_n^X}\lim_\beta\colim_\alpha H_n(Q_{\alpha\beta})\]
\[\colim_\beta\lim_\alpha H^n(Q_{\alpha\beta})\xr{\tau^n_X}\lim_\alpha\colim_\beta H^n(Q_{\alpha\beta})\]
(these will be introduced in a more systematic way in \S\ref{tau} after a preparation in \S\ref{background}) 
and to the following

\begin{mainproblem}\label{prI}
Compute the kernels and cokernels of $\tau_n^X$ and $\tau^n_X$.
\end{mainproblem}

Apart from the straightforward condition that $\tau_n^X$ and $\tau^n_X$ be isomorphisms, there is also a more 
subtle idea of commutativity between direct and inverse limit in our topological setting, which takes into account 
``correction terms'' for the inverse limit: the derived limits $\lim^p$, $p>0$.
It concerns the Steenrod--Sitnikov homology and the \v Cech cohomology of the separable metrizable space $X$,
which are defined as follows:
\[H_n(X)\bydef \colim_\alpha H_n(K_\alpha)\]
\[H^n(X)\bydef \colim_\beta H^n(P_\beta)\]
where $H_n(K_\alpha)$ is Steenrod homology (concerning the latter see e.g.\ \cite{M1} or \cite{M00}).

\begin{mainproblem} \label{prII}
Can one go in the opposite direction? 
That is,

(a) can $H_n(X)$ be reconstructed from $\lim H_n(P_\beta)$ and $\lim^p H_q(P_\beta)$ for $p>0$?

(b) can $H^n(X)$ be reconstructed from $\lim H^n(K_\alpha)$ and $\lim^p H^q(K_\alpha)$ for $p>0$?
\end{mainproblem}

Here ``reconstructed'' is meant in a sense slightly weaker than outright computation, so that, for instance, groups
$G_n$ are understood to be reconstructible from the term $E^2_{pq}$ of a spectral sequence converging to $G_{p+q}$.

To see the connection between Problems \ref{prI} and \ref{prII}, let us note that 
\[H_n(P_\beta)\simeq\colim_\alpha H_n(Q_{\alpha\beta})\]
\[H^n(K_\alpha)\simeq\colim_\beta H^n(Q_{\alpha\beta})\]
and there are short exact sequences (Milnor et al., see Theorem \ref{milnor-ses'})
\[0\to\derlim_\beta H_{n+1}(Q_{\alpha\beta})\to H_n(K_\alpha)\to\lim_\beta H_n(Q_{\alpha\beta})\to 0\]
\[0\to\derlim_\alpha H^{n-1}(Q_{\alpha\beta})\to H^n(P_\beta)\to\lim_\alpha H^n(Q_{\alpha\beta})\to 0.\]

\subsection{On Problem II}
For {\it local compacta}, i.e.\ locally compact separable metrizable spaces, Problem \ref{prII}(b) is known to have 
affirmative solution since the 1970s.
Indeed, if $X$ is a local compactum, or equivalently its compact subsets admit a cofinal subsequence,%
\footnote{Every local compactum $X$ is the union of its compact subsets 
$K_1\subset K_2\subset\dots$, where each $K_i\subset\Int K_{i+1}$.
These constitute a cofinal sequence in the poset of all compact subsets of $X$.
Conversely, if $X$ is a separable metrizable space whose poset of compact subsets contains a cofinal sequence,
then $X$ is easily seen to be a local compactum. 
See \cite{M00}*{Proposition \ref{book:local compactum}} for the details.}
then there is Petkova's short exact sequence (see Theorem \ref{10.10'})
\[0\to\derlim_\alpha H^{n-1}(K_\alpha)\to H^n(X)\to\lim_\alpha H^n(K_\alpha)\to 0.\]

No result of this kind in homology was known until recently.
In fact, the nerves of a metrizable space $X$ admit a cofinal subsequence if and only if $X$ is the union
of a compactum and a set of isolated points of $X$ (see \cite{M-V}*{Theorem \ref{cor:uc}}).
But isolated points make no contribution to homology of positive dimension, nor even to the $\derlim$
part of $0$-homology; so we do not get anything interesting in this way.

However, it turns out that there exists a much wider class of spaces that are still ``dual'' in a way to local compacta.
A {\it coronated polyhedron} is a metrizable space that contains a compactum whose complement is homeomorphic to 
a polyhedron (i.e.\ a simplicial complex with the metric topology).
Coronated polyhedra include, apart from compacta and polyhedra, many familiar spaces, such as the topologist's 
sine curve (and the Warsaw circle) and the comb space (also known as the comb-and-flea space).
The complement of a locally compact subset of a sphere is always a coronated polyhedron \cite{M-V}.

Although the directed set of open covers of a coronated polyhedron $X$ need not admit a cofinal subsequence, 
this turns out to be ``true up to homotopy'', as $X$ admits a sequential polyhedral resolution \cite{M-V}.
Due to this (but not only this), there is a short exact sequence in homology 

\begin{theorem}\label{co-Petkova} \cite{M-V} 
For any coronated polyhedron $X$ and any its sequential ANR resolution $\dots\to P_2\to P_1$ 
there is a short exact sequence
\[0\to\derlim_\beta H_{n+1}(P_\beta)\to H_n(X)\to\lim_\beta H_n(P_\beta)\to 0.\]
\end{theorem}

This yields an affirmative solution to Problem \ref{prII}(a) for coronated polyhedra.

It is likely, however, that Problem \ref{prII}(a) for local compacta and Problem \ref{prII}(b) for
coronated polyhedra admit no satisfactory solution in ZFC.

\begin{example} \label{MP}
Let $X=(\R^n\x\N)^+\x\N$, where $\N$ denotes the countable discrete space and $+$ stands for 
the one-point compactification.
Let us note that $X$ is a local compactum.
It is not hard to compute that $H_n(X)\simeq\bigoplus_{i\in\N}\prod_{i\in\N}\Z$ and
the reduced homology $\tilde H_i(X)=0$ for $i\ne n$ (see \cite{M-II}*{Example \ref{axi:10.3}(a)}).

Let $N_\beta$ run over the nerves of all open covers of $X$, and let $G_\beta=H_n(N_\beta)$.
See \cite{M-II}*{Examples \ref{axi:10.3}, \ref{axi:non-closed}} for a detailed discussion of 
the inverse system $G_\beta$.
It is well-known from a paper by S. Marde\v si\' c and A. Prasolov \cite{MP} that the statement 
\[\lim\nolimits^1 G_\beta=0\] 
cannot be either proved or disproved in ZFC.
More precisely, $\lim^1 G_\beta=0$ follows from the Proper Forcing Axiom \cite{DSV}*{\S3}, as well as from
other set-theoretic assumptions \cite{DSV}*{\S4}, whereas $\lim^1 G_\beta\ne 0$ follows from 
the Continuum Hypothesis \cite{MP} as well as from other set-theoretic assumptions 
\cite{DSV}*{\S2}, \cite{To}.
\end{example}

\begin{example} \label{MP'}
Let $Y$ be the metric wedge $\bigvee_{i\in\N} (\R^n\x\N)^+$, or equivalently the metric quotient 
$X/(\{\infty\}\x\N)$, where $X$ is as in the previous example 
(see \cite{M00}*{\S\ref{book:corrected} and \S\ref{book:metric wedge}} for the definitions 
of a metric wedge and a metric quotient, and \cite{M00}*{Corollary \ref{book:nullseq-sqcup}} 
for the proof of the equivalence for $n=0$; the argument for $n>0$ is similar).
Let us note that $Y$ is a non-locally-compact coronated polyhedron.
It is not hard to compute that $H^n(Y)\simeq\bigoplus_{i\in\N}\prod_{i\in\N}\Z$ and
the reduced cohomology $\tilde H^i(Y)=0$ for $i\ne n$ (see \cite{M-II}*{Example \ref{axi:10.3}(b)}).

Let $K_\alpha$ run over all compact subsets of $Y$, and let $H_\alpha=H^n(K_\alpha)$.
The inverse systems $H_\alpha$ and $G_\beta$ have isomorphic cofinal subsets 
(see \cite{M-II}*{Example \ref{axi:10.3}}).
Then it follows from the previous example that that the statement $\lim^1 H_\alpha=0$
also cannot be either proved or disproved in ZFC.
\end{example}

Part II of the present series \cite{M-IV} establishes an affirmative solution to Problems \ref{prII}(a)
(for finite-dimensional separable metrizable spaces) and \ref{prII}(b) (for arbitrary separable metrizable spaces) 
in ZFC, but in a modified form: the derived limits, including the inverse limit, have to be ``corrected'' 
so as to take into account a natural topology on the indexing sets.
The modified derived limits coincide with the usual ones when the indexing sets are discrete.
Keeping track of the topology on the indexing sets is also implicit in the approach of \cite{M-I}.

\subsection{On Problem I}
Let us now return to the task of understanding the homomorphisms $\tau_n^X$ and $\tau^n_X$.
In \S\ref{mainsection} we introduce a new functor $\derlimfg$ and observe that a natural transformation
between $\derlimfg$ and $\derlim$ is always injective.
This enables some progress on Problem \ref{prI} (see Theorems \ref{10.13} and \ref{10.13'}):

\begin{maintheorem} \label{thA}
(1) If $X$ is a coronated polyhedron, then $\tau_n^X$ is surjective and its kernel is isomorphic to 
$\derlim H_{n+1}(P_\beta)/\derlimfg H_{n+1}(P_\beta)$.

(2) If $X$ is a local compactum, then $\tau^n_X$ is surjective and its kernel is isomorphic to 
$\derlim H^{n-1}(K_\alpha)/\derlimfg H^{n-1}(K_\alpha)$.
\end{maintheorem}

The proofs of (1) and (2) are largely independent of each other and are based on rather different 
ideas and constructions.

The following result is relatively easy (see Theorem \ref{nobeling-los} and Proposition \ref{0-cohomology}).

\begin{maintheorem} \label{thB}
(1) If $X$ is a local compactum, then $\tau_0^X\:pH_0(X)\to qH_0(X)$ is surjective.

(2) $\tau^0_X\:pH^0(X)\to qH^0(X)$ is an isomorphism for every metrizable space $X$.
\end{maintheorem}

The use of the chain complexes for Steenrod--Sitnikov homology introduced in \cite{M-II} enables some 
further progress, or at least so it seems (see Corollary \ref{lim2corollary}):

\begin{maintheorem} \label{thC} 
Let $X$ be a local compactum.
Assume that 

(1) $\lim^1 G_\beta=0$, where $G_\beta$ is the Marde\v si\'c--Prasolov inverse system (see Example \ref{MP}),

(2) $\lim^2 H_{n+1}(N_\alpha)=0$, where $N_\alpha$ runs over the nerves of all open covers of $X$, and

(3) $\lim^3 H_\beta=0$ for every inverse system $H_\beta$ of cofinality at most continuum.

Then $\tau_n^X$ is surjective.
\end{maintheorem}

Here assertion (1) is independent of ZFC (see Example \ref{MP}) and assertion (3) is independent of ZFC 
(see \S\ref{lim2van}).
However, the author does not know if they are consistent with each other.%
\footnote{J. Bergfalk pointed out to the author that they do appear to contradict each other:
``if $\lim^1$ of the Marde\v si\v c--Prasolov system is $0$ then the continuum is of cardinality at least $\aleph_2$. 
And under these circumstances, there will exist inverse systems of the type appearing in (2) 
(and of cofinality $\aleph_2$, in particular) whose $\lim^3$ is nonzero.''
Nevertheless it might still be better to mention Theorem \ref{thC} than to keep it in secret, 
as one may be able to extract a more convincing result from its proof.}

As for assertion (2), the author does not know whether there exists a local compactum $X$ that does not satisfy (2).
It would be quite surprising if someone manages to construct such a space.
By contrast, it seems much more likely that there exists a Polish space $X$ that does not satisfy (2), although 
a rather natural approach to constructing it yields only a somewhat weaker result 
(Example \ref{lim2example}).

\section{Background} \label{background}

\subsection{Expansions}

Let us recall that an {\it expansion} of a metrizable space $X$ is an inverse system 
$(P_\lambda,[p^\lambda_\mu];\Lambda)$ in the homotopy category (thus the bonding maps 
$p^\lambda_\mu\:P_\lambda\to P_\mu$ are required to commute only up to homotopy) along
with maps $p^\infty_\lambda\:X\to P_\lambda$ which commute up to homotopy with the bonding maps
and satisfy the following two conditions for every polyhedron $K$ 
(or equivalently for every ANR $K$):
\begin{enumerate}
\item[(E1)] Given a map $f\:X\to K$, there exists a $\lambda\in\Lambda$ and a map 
$f'_\lambda\:P_\lambda\to K$ such that the composition 
$f'\:X\xr{p^\infty_\lambda}P_\lambda\xr{f'_\lambda}K$ is homotopic to $f$.
\item[(E2)] Given a $\kappa\in\Lambda$ and maps 
$P_\kappa\overset{f_\kappa}{\underset{g_\kappa}{\rightrightarrows}}K$ such that 
the compositions $X\xr{p^\infty_\kappa}P_\kappa
\overset{f_\kappa}{\underset{g_\kappa}{\rightrightarrows}}K$ are homotopic, there exists 
a $\lambda\ge\kappa$ such that the compositions $P_\lambda\xr{p^\lambda_\kappa}P_\kappa
\overset{f_\kappa}{\underset{g_\kappa}{\rightrightarrows}}K$ are homotopic.
\end{enumerate}
If $Y$ is a closed subset of $X$, an {\it expansion} of $(X,Y)$ is an inverse system 
$\big((P_\lambda,Q_\lambda),[p^\lambda_\mu];\Lambda)$ in the homotopy category of pairs along
with maps $p^\infty_\lambda\:(X,Y)\to (P_\lambda,Q_\lambda)$ which commute up to homotopy with 
the bonding maps and satisfy the similar conditions for every polyhedral pair $(K,L)$,
or equivalently for every closed pair of ANRs $(K,L)$.

If additionally each $P_\lambda$ is a metric polyhedron (resp.\ ANR), and each $Q_\lambda$ 
is its closed subpolyhedron (resp.\ a closed subset which is an ANR), then the expansion 
is referred to as a {\it polyhedral expansion} (resp.\ {\it ANR expansion}).

\begin{example} The inverse system given by the nerves $N_C$ of all open covers $C$ of $X$
is a polyhedral expansion of $X$.
The inverse system of pairs $(N_C,N_{C|_Y})$, where $C|_Y$ is the cover of $Y$ by 
those intersections of the elements of $C$ with $Y$ that are nonempty, is a polyhedral expansion of $(X,Y)$.
\end{example}

\begin{example} Suppose that $X$ is embedded in an absolute retract $M$.
For instance, $M$ can be taken to be the vector space of all bounded functions $X\to\R$ 
with the sup norm; or if $X$ is separable, $X$ can be taken to be the Hilbert cube;
or if $X$ is separable and finite-dimensional, $X$ can be taken to a finite-dimensional cube.

Then the inverse system given by all open neighborhoods of $X$ in $M$ and their inclusions
in each other is an ANR expansion of $X$.
The inverse system of all pairs $(U,V)$, where $U$ is a neighborhood of $X$ and $V\subset U$
is a neighborhood of $Y$, is an ANR expansion of $(X,Y)$.
\end{example}

The details of these examples and all necessary background information can be found in \cite{M00} 
or \cite{MS}.

\subsection{Quasi-(co)homology}
We define the {\it quasi-homology} $qH_n(X)$ of a metrizable space $X$ to be the inverse limit 
$\invlim_\alpha H_n(P_\alpha)$, where the $P_\alpha$ form an ANR expansion of $X$. 
This does not depend on the choice of the ANR expansion since any two ANR expansions of $X$
are isomorphic in pro-homotopy (see \cite{M00}*{Corollary \ref{book:expansion3}}).
More generally, for a closed $Y\subset X$ we set \[qH_n(X,Y)\bydef \invlim_\alpha H_n(P_\alpha,Q_\alpha),\] 
where $(P_\alpha,Q_\alpha)$ form an ANR expansion of $(X,Y)$.

Dually, the {\it quasi-cohomology} $qH^n(X)$ of $X$ is defined as the inverse limit of \v Cech cohomology
$\invlim_K H^n(K)$ over all compacta $K\subset X$.
More generally, for a closed $Y\subset X$ we set \[qH^n(X,Y)\bydef \invlim_K H^n(K,\,K\cap Y).\]

Quasi-homology has traditionally been called ``\v Cech homology'' (or ``Alexandroff--\v Cech homology''), 
which is a traditionally confusing terminology since it is not a homology theory (it does not satisfy 
the exact sequence of a pair, already on pairs of compacta, see e.g.\ \cite{ES}*{\S X.4 ``Continuity versus 
exactness''}).
Also quasi-cohomology, which is similarly not a cohomology theory, is not to be confused with 
\v Cech cohomology (which is a cohomology theory).

Apart from the exactness axiom, quasi-(co)homology does satisfy all other axioms of an ordinary (co)homology theory, 
and even more: 
\begin{itemize}
\item the map excision axiom and in particular the usual excision; 
\item (anti)shape invariance and in particular homotopy invariance; 
\item additivity with respect to disjoint unions and with respect to metric wedges
\end{itemize}
(see \cite{M00}*{proof of Theorems \ref{book:homology-ext} and \ref{book:cohomology-ext}, but using 
Theorem \ref{book:coprod-lim} instead of Theorem \ref{book:prod-colim}}).
Let us note that $\bigsqcup$-additivity of quasi-homology was proved originally in \cite{MP}*{Theorem 9}, but 
the proof mentioned above is easier.
Let us also note that quasi-homology and quasi-cohomology are fine shape invariant by
\cite{M-I}*{either Theorem \ref{fish:main5} or Theorem \ref{fish:main1a}}.

From the Alexander duality for compacta (see \cite{M00}*{Theorem \ref{book:alex duality}(a)}) we get,
by applying inverse limit to both sides:

\begin{theorem} \label{alex duality'}
Let $X$ be a nonempty proper subset of $S^m$. 
Then $q\tilde H_n(S^m\but X)\simeq q\tilde H^{m-n-1}(X)$.
\end{theorem}

Here $q\tilde H_n(Y)=\ker\big(qH_n(Y)\to qH_n(pt)\big)$ and $q\tilde H^k(Y)=\coker\big(qH^k(pt)\to qH^k(Y)\big)$.

\subsection{Milnor-type short exact sequences}

For a metrizable $X$ and its closed subset $Y$ let 
\[q^kH_n(X,Y)\bydef \invlim^k H_n(P_\alpha,Q_\alpha),\]
where the pairs $(P_\alpha,Q_\alpha)$ form an ANR expansion of $X$, and let
\[q^kH^n(X,Y)\bydef \invlim^k H^n(K,\,K\cap Y)\]
over all compact $K\subset X$.
When $k=0$, this repeats the definition of quasi-(co)homology.
For $k>0$, it can be called {\it phantom quasi-(co)homology.}

Milnor's original short exact sequences (see \cite{M00}*{Corollary \ref{book:milnor-ses}(a,b)})
yield the following:

\begin{theorem} \label{milnor-ses'} (a) For every compactum $K$ there is a natural short exact sequence 
\[0\to q^1H_{n+1}(K)\to H_n(K)\to qH_n(K)\to 0.\]

(b) For every countable simplicial complex $L$ there is a natural short exact sequence
\[0\to q^1H^{n-1}(|L|)\to H^n(|L|)\to qH^n(|L|)\to 0.\]
\end{theorem}

The naturality is with respect to arbitrary maps (using PL approximation for (b)).

\begin{proof}[Proof. (a)]
Let us represent $K$ as the limit of an inverse sequence of compact polyhedra $P_i$.
Then $P_i$ form a polyhedral expansion of $K$ (see \cite{M00}*{Lemma \ref{book:compact-resolution} and 
Theorem \ref{book:exp-res}}).
Hence $qH_n(K)\simeq\lim H_n(P_i)$ and $q^1H_{n+1}(K)\simeq\lim^1 H_{n+1}(P_i)$, and the
assertion becomes a reformulation of \cite{M00}*{Corollary \ref{book:milnor-ses}(a)}.
\end{proof}

\begin{proof}[(b)] Let us fix an order on the simplexes of $L$ (by natural numbers) and let $K_i$ be 
the union of the first $i$ simplexes.
Then $qH^n(|L|)\simeq\lim H^n(K_i)$ and $q^1H^{n-1}(|L|)\simeq\lim^1 H^{n-1}(K_i)$
(see \cite{M00}*{proof of Corollary \ref{book:poly-cofinal2}}).
Now the assertion becomes a reformulation of \cite{M00}*{Corollary \ref{book:milnor-ses}(b)}.
\end{proof}

\begin{remark} \label{lim2fg}
In fact, (b) is also valid for uncountable simplicial complexes 
\cite{AY}*{Corollary 12} and (a) for compact Hausdorff spaces \cite{KS}*{Theorems 3(1) and 5(1)} 
(see also \cite{Sk5}*{end of \S8.1.2}), \cite{Sk84}*{Theorem 3.2} --- but for a rather different reason: 
$\invlim^2 G_\alpha=0$ if each $G_\alpha$ is a finitely generated abelian group 
\cite{Roo2}*{Theorem 2} (see also \cite{Ku}*{Theorem 5(c)}, \cite{Je}*{Remark on p.\ 65}, \cite{Yo}, 
\cite{HM}, \cite{Mard}*{Theorem 20.17}).

Theorem \ref{milnor-ses'}(b) is also valid for locally compact ANRs, since their compact subsets have 
finitely generated cohomology groups upon restriction to any compact subset of the interior 
(see \cite{M1}*{Theorem 6.11(b)}).
\end{remark}

Petkova's short exact sequence (see \cite{M00}*{Corollary \ref{book:milnor-ses}(c)}) yields the following:

\begin{theorem} \label{10.10'}
For every local compactum $X$ there is a natural short exact sequence
\[0\to q^1H^{n-1}(X)\to H^n(X)\to qH^n(X)\to 0.\]
\end{theorem}

The hypothesis of local compactness cannot be dropped by Example \ref{10.5'''} 
and Corollary \ref{epi-epi}(a) below.

From \cite{M-V}*{Theorem \ref{cor:main-ses}} we obtain:

\begin{theorem} \label{co10.10'}
For every coronated polyhedron $X$ there is a natural short exact sequence
\[0\to q^1H_{n+1}(X)\to H_n(X)\to qH_n(X)\to 0.\]
\end{theorem}

The hypothesis that $X$ is a coronated polyhedron cannot be dropped, by Example \ref{10.5''} and 
Corollary \ref{epi-epi}(b) below.

\subsection{Pseudo-(co)homology}
We define the {\it pseudo-homology} $pH_n(X)$ of the metrizable space $X$ to be the direct limit 
of the quasi-homology groups, $\colim_K qH_n(K)$, over all compacta $K\subset X$.
More generally, for a closed $Y\subset X$ we set \[pH_n(X,Y)\bydef \colim\nolimits_K qH_n(K,\,K\cap Y).\]
Traditionally, pseudo-homology has been called ``\v Cech homology with compact supports''.

Dually, the {\it pseudo-cohomology} $pH^n(X)$ is defined as the direct limit of the quasi-cohomology groups,
$\colim_\alpha qH^n(P_\alpha)$, where the $P_\alpha$ form an ANR expansion of $X$. 
This does not depend on the choice of the ANR expansion since any two ANR expansions of $X$
are isomorphic in pro-homotopy (see \cite{M00}*{Corollary \ref{book:expansion3}}).
More generally, for a closed $Y\subset X$ we set \[pH^n(X,Y)\bydef \colim\nolimits_\alpha qH^n(P_\alpha,Q_\alpha),\]
where $(P_\alpha,Q_\alpha)$ form an ANR expansion of $(X,Y)$.

Pseudo-(co)homology again satisfies all axioms of an ordinary (co)homology theory except for exactness, 
and more: 
\begin{itemize}
\item the map excision axiom and in particular the usual excision; 
\item (anti)shape invariance and in particular homotopy invariance; 
\item additivity with respect to disjoint unions and with respect to metric wedges
\end{itemize}
(see \cite{M00}*{proof of Theorems \ref{book:homology-ext} and \ref{book:cohomology-ext}}).
Let us note that pseudo-homology and pseudo-cohomology are also fine shape invariant by
\cite{M-I}*{either Theorem \ref{fish:main5} or Theorem \ref{fish:main1a}}.

Theorem \ref{alex duality'} implies, by applying direct limit to both sides, that for any $X\subset S^m$
\[p\tilde H_n(S^m\but X)\simeq p\tilde H^{m-n-1}(X),\]
where $p\tilde H_n(Y)=\ker\big(pH_n(Y)\to pH_n(pt)\big)$ and $p\tilde H^k(Y)=\coker\big(pH^k(pt)\to pH^k(Y)\big)$.

For a metrizable $X$ and a closed $Y\subset X$ let
\[p^kH_n(X,Y)\bydef \colim q^kH_n(K,\,K\cap Y)\] over all compact $K\subset X$,
and \[p^kH^n(X,Y)\bydef \colim q^kH^n(P_\alpha,Q_\alpha)\]
where the pairs $(P_\alpha,Q_\alpha)$ form an ANR expansion of $X$.
When $k=0$, this repeats the definition of pseudo-(co)homology.
For $k>0$, it can be called {\it phantom pseudo-(co)homology.}
 
Since direct limit is an exact functor, Theorem \ref{milnor-ses'} yields

\begin{corollary} \label{milnor-ses''}
For every metrizable space $X$ there are natural short exact sequences
\begin{gather*}
0\to p^1H_{n+1}(X)\to H_n(X)\to pH_n(X)\to 0,\\
0\to p^1H^{n-1}(X)\to H^n(X)\to pH^n(X)\to 0.
\end{gather*}
\end{corollary}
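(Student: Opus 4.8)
The plan is to build both sequences by applying the colimit functor to the natural short exact sequences already supplied by Corollary~\ref{milnor-ses'}, using the fact that a filtered colimit of abelian groups is an exact functor. Thus the real content is only to arrange these sequences into morphisms of direct systems over the appropriate indexing posets and then to match the three resulting colimits with the groups named in the statement; exactness of the resulting sequence is then automatic.

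For the homology sequence I would index over the directed poset of all compact subsets $K\i X$, ordered by inclusion (this poset is directed, since the union of two compacta is compact, so the colimit over it is filtered). For each such $K$, Corollary~\ref{milnor-ses'}(a) gives a short exact sequence
\[0\to q^1H_{n+1}(K)\to H_n(K)\to qH_n(K)\to 0,\]
whose naturality with respect to arbitrary maps applies in particular to the inclusions $K\emb K'$. Hence the three terms are functors on this poset and the two maps are morphisms of direct systems. Passing to the colimit and invoking its exactness produces a short exact sequence whose terms are, by the very definitions of $p^1H_{n+1}$, $H_n$ and $pH_n$, equal to $\colim_K q^1H_{n+1}(K)=p^1H_{n+1}(X)$, $\colim_K H_n(K)=H_n(X)$ and $\colim_K qH_n(K)=pH_n(X)$. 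Naturality in $X$ follows because a map $X\to X'$ carries compacta to compacta and induces a map of the direct systems, after which functoriality of $\colim$ finishes the job.

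For the cohomology sequence I would index instead over the directed set of open covers $C$ of $X$, ordered by refinement, applying Corollary~\ref{milnor-ses'}(b) to the nerve $N_C$ of each cover to get
\[0\to q^1H^{n-1}(N_C)\to H^n(N_C)\to qH^n(N_C)\to 0.\]
A refinement $C'$ of $C$ induces a map of nerves $N_{C'}\to N_C$, and the naturality of Corollary~\ref{milnor-ses'}(b) (with respect to arbitrary maps, via PL approximation) assembles the sequences into morphisms of direct systems indexed by covers. The colimit is again a short exact sequence, with terms $\colim_C q^1H^{n-1}(N_C)=p^1H^{n-1}(X)$, $\colim_C H^n(N_C)=H^n(X)$ and $\colim_C qH^n(N_C)=pH^n(X)$ by definition, and naturality in $X$ follows as before, by pulling back open covers.

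The main point to watch lies in the cohomology case, for two reasons. First, a refinement determines the bonding map $N_{C'}\to N_C$ only up to homotopy; but all three terms of the Milnor sequence are homotopy-invariant functors of the nerve, so the induced maps are well defined, and the PL-approximation form of the naturality in Corollary~\ref{milnor-ses'}(b) guarantees the relevant squares commute --- this is bookkeeping rather than a genuine obstacle. Second, and more substantively, for a non-separable $X$ the nerve $N_C$ may be an uncountable simplicial complex, so the literal hypothesis of Corollary~\ref{milnor-ses'}(b) fails; here I would appeal to the extension of that corollary to arbitrary simplicial complexes recorded in Remark~\ref{lim2fg}, which rests on the vanishing $\invlim^2 G_\alpha=0$ for systems $G_\alpha$ of finitely generated abelian groups. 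Once these two points are dealt with, exactness of the filtered colimit delivers both sequences with no further calculation.
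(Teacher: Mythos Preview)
Your proposal is correct and follows exactly the approach the paper takes: the paper's entire proof is the single sentence ``Since direct limit is an exact functor, Corollary~\ref{milnor-ses'} yields [this],'' and you have simply unpacked that sentence in detail. Your attention to the uncountable-nerve issue in the non-separable case (handled via Remark~\ref{lim2fg}) and to the homotopy-well-definedness of the refinement maps goes beyond what the paper spells out, but is in the same spirit.
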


\section{Alexandroff's homomorphism} \label{mainsection}

\subsection{Alexandroff's homomorphism} \label{tau}

By the universal property of direct limits, the inclusion induced homomorphisms $qH_n(K)\to qH_n(X)$, where
$K$ runs over compact subsets of $X$, factor through a unique homomorphism \[\tau_n^X\:pH_n(X)\to qH_n(X);\]
and the homomorphisms $qH^n(P_\alpha)\to qH^n(X)$ induced by the maps $p^\infty_\alpha\:X\to P_\alpha$ factor through 
a unique homomorphism \[\tau^n_X\:pH^n(X)\to qH^n(X).\]

The $\bigsqcup$- and $\bigvee$-additivity properties yield that $\tau_n^X$ and $\tau^n_X$ are isomorphisms 
for each $n$ if $X$ is a disjoint union of compacta or a metric wedge of polyhedra, or more generally 
the result of an iterated application of $\bigsqcup$ and $\bigvee$ to compacta and polyhedra.

\begin{proposition} \label{0-cohomology}
$\tau^0_X\:pH^0(X)\to qH^0(X)$ is an isomorphism for all $X$.
\end{proposition}

\begin{proof} For a polyhedron $P$ we have $qH^0(P)\simeq H^0(P)$ by the Milnor short exact sequence.
Hence $pH^0(X)\simeq H^0(X)$.
Now $H^0(X)\simeq\Z^X$, where $\Z^X$ denotes the group of all continuous maps $X\to\Z$ (with respect to 
the discrete topology on $\Z$) with pointwise addition (see \cite{M00}*{Theorem \ref{book:0-homology}(a)}).
On the other hand $qH^0(X)\simeq\lim H^0(K_\alpha)\simeq\lim\Z^{K_\alpha}$ and $\tau^0_X$ is easily seen
to correspond to the homomorphism $\Z^X\to\lim\Z^{K_\alpha}$ yielded by the restriction maps
$\Z^X\to\Z^{K_\alpha}$.
But this homomorphism is a bijection (see \cite{M00}*{Example \ref{book:sit-example2}}).
\end{proof}

\begin{theorem} \label{nobeling-los}
Let $X$ be a local compactum. 
Then $\tau_0^X\:pH_0(X)\to qH_0(X)$ is surjective.
\end{theorem}

The proof uses two powerful results on abelian groups, due to N\"obeling and \L o\'s.

\begin{proof} Since $X$ is separable, we have $qH_0(X)\simeq\Hom(\Z^X,\Z)$, where $\Z^X$ denotes 
the group of all continuous maps $X\to\Z$ (with respect to the discrete topology on $\Z$) with 
pointwise addition (see \cite{M00}*{Theorem \ref{book:0-homology}(b)}).
Since $X$ is a local compactum, there exist compact subsets $K_1\subset K_2\subset\dots$ of $X$ 
such that $X=\bigcup_i K_i$ and each $K_i\subset\Int K_{i+1}$.
Then $\Z^X\simeq\lim\Z^{K_i}$ (see \cite{M00}*{Example \ref{book:sit-example2}}).
On the other hand, $pH_0(X)\simeq\colim qH_0(K_i)\simeq\colim\Hom(\Z^{K_i},\Z)$,
and $\tau_0^X$ is easily seen to correspond to the natural homomorphism
$\phi\:\colim\Hom(\Z^{K_i},\Z)\to\Hom(\lim\Z^{K_i},\Z)$.

If $S$ is a set and $A$ is an abelian group, let $FV(S,A)$ denote the group of all 
finite-valued functions $S\to A$ (with pointwise addition).
A subgroup $G$ of $FV(S,A)$ is called a {\it Specker subgroup} if for each $g\in G$ 
and each $a\in A\but\{0\}$ the indicator function $\I_{g^{-1}(a)}$ belongs to $G$.
Let us note that $\Z^{K_i}$ is a Specker subgroup of $FV(K_i,\Z)$.
(Indeed, if $g\:K_i\to\Z$ is continuous, then each $g^{-1}(a)$ is clopen, and 
consequently $\I_{g^{-1}(a)}$ is also continuous.)
Moreover, the image $G_i$ of $\Z^X$ in $\Z^{K_i}$ is also a Specker subgroup of $FV(K_i,\Z)$.
(Indeed, if $g=f|_{K_i}$, where $f\:X\to\Z$ is continuous, then each $f^{-1}(m)$ is clopen, 
hence $\I_{f^{-1}(m)}$ is continuous, and therefore so is 
$\I_{g^{-1}(m)}=\I_{f^{-1}(m)}|_{K_i}$.)
Then by N\"obeling's theorem (see \cite{Fu}*{Corollary 97.4 and Theorem 97.3}) $\Z^{K_i}$
is a free abelian group and $G_i$ is its direct summand.

From the above we get that $\Z^X\simeq\lim\big(\dots\to G_2\to G_1\big)$, where each $G_i$
is free abelian.
Then each $G_i$ is a projective $\Z$-module, so each $G_{i+1}\simeq G_i\oplus F_{i+1}$ for some
free abelian group $F_{i+1}$.
Writing $F_1=G_1$, we get that each $G_i\simeq F_1\oplus\dots\oplus F_i$ and 
$\Z^X\simeq\prod_{i=1}^\infty F_i$.
Then by \L o\'s's theorem (see \cite{Fu}*{Corollary 94.5}) every homomorphism 
$\Z^X\to\Z$ factors through some $G_i$.
Since $G_i$ is a direct summand in $\Z^{K_i}$, we further obtain that
every homomorphism $\Z^X\to\Z$ factors through some $\Z^{K_i}$.
In other words, $\phi$ is surjective.
\end{proof}

The homomorphism $\tau_n^X$ was studied already by P. S. Alexandroff in 1947, who constructed a $1$-dimensional 
Polish space $X$ with $qH_0(X,pt)=0$ and $pH_0(X,pt)\ne 0$ \cite{Al2}*{p.\ 214}.
On the other hand, answering a question raised independently by Alexandroff and S. Kaplan, a few years later
E. F. Mishchenko constructed a $2$-dimensional Polish space $Y$ with $pH_1(Y)=0$ and 
$qH_1(Y)\ne 0$ \cite{Mi} (see also \cite[p.\ 444, Remark 24]{Li2}).

\begin{remark} \label{jones}
There exists a Polish space $X$ which is connected (hence $qH_0(X,pt)=0$) 
but contains no nondegenerate compact connected subset (hence $pH_0(X,pt)\ne 0$, for if 
a pair of distinct points of $X$ lie in the same connected component of some compact subset 
of $X$, then they lie in a compact connected subset of $X$, which is a contradiction).
For instance, the graph of $f(x)=\sum_{n=1}^\infty 2^{-n}\sin(\frac1x-r_n)$,
where $r_n$ is the $n$th rational number (in some order) and $\sin(\infty)=0$
(see \cite[vol.\ II, \S47.IX]{Kur}).
In fact, such a space can even be locally connected \cite{Jo}.
\end{remark}

\subsection{Examples: homology}

The literature does not seem to contain an example of a local compactum $X$ whose $\tau_n^X$ is not an isomorphism.%
\footnote{It is erroneously claimed in \cite{Sk2}*{second page of the introduction} that $\N^+\x\N$ is 
such an example, where $\N^+$ is the one-point compactification of the countable discrete space $\N$.
This claim contradicts the fact that $\tau_n^X$ is an isomorphism if $X$ is a disjoint union of compacta.}
It is not hard to construct such an example, for which $\tau_n^X$ fails to be injective.

\begin{figure}[h]
\includegraphics[width=15.7cm]{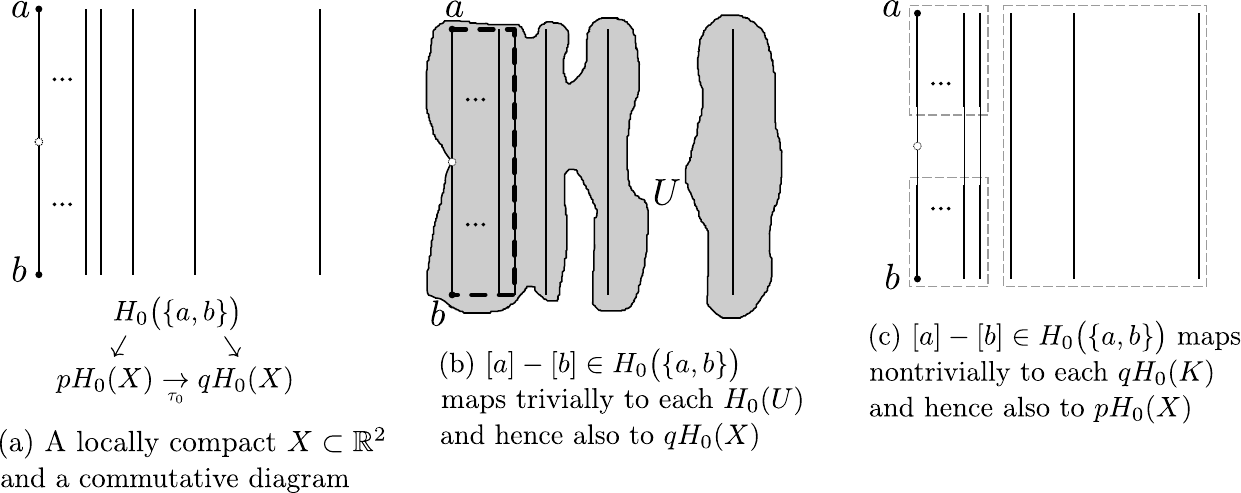}
\caption{$\tau_0$ need not be injective}
\label{alexandroff}
\end{figure}

\begin{example}\label{10.5} (See Figure \ref{alexandroff}.) 
Let $\frac1{\N^+}$ denote the subset $\{\frac1n\mid n\in\N\}\cup\{0\}$ 
of $[0,1]$, and let $X$ be the subspace $\frac1{\N^+}\times [-1,1]\but \{(0,0)\}$ of $\R^2$.
Clearly, $X$ is a local compactum.
To prove that $\tau_0^X$ is not injective, let $a=(0,1)$ and $b=(0,-1)$ and 
let $[a]-[b]\in H_0(\{a,b\})$ stand for a generator of $\ker\big(H_0(\{a,b\})\to H_0(\R^2)\big)$.
It suffices to show that $H_0(\{a,b\})\to qH_0(X)$ sends $[a]-[b]$ to zero, whereas $H_0(\{a,b\})\to pH_0(X)$ 
sends $[a]-[b]$ to a nontrivial element.

Indeed, if $U$ is an open neighborhood of $X$ in $\R^2$, then there exists an $n\in\N$ such that $U$ 
contains $[0,\frac1n]\x\{1,-1\}$.
Since $\{\frac1n\}\x[-1,1]$ as a subset of $X$ is also contained $U$, the image of $[a]-[b]$ in $H_0(U)$ is zero.
Hence $[a]-[b]$ maps trivially to $qH_0(X)$.

On the other hand, if $K\subset X$ is compact, then it is disjoint from $[0,\frac1k]\x[-\frac1k,\frac1k]$ 
for some $k\in\N$.
Hence $K$ lies in the union of three pairwise disjoint rectangles, $R_{10}=[\frac1{k-1},1]\x[-1,1]$, 
$R_{01}=[0,\frac1k]\x[\frac1k,1]$ and $R_{0,-1}=[0,\frac1k]\x[-1,-\frac1k]$; whereas $a$ and $b$ land 
in different $R_{ij}$'s.
Thus $[a]-[b]$ maps nontrivially to $qH_0(K)$ for each compact $K\subset X$ containing $\{a,b\}$, and therefore 
also nontrivially to $pH_0(X)$.
\end{example}

\begin{figure}[h]
\includegraphics[width=5cm]{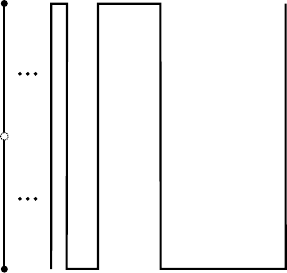}
\caption{A local compactum $X\subset\mathbb R^2$ with $qH_0(X)=0$ and $pH_0(X)\ne 0$}
\label{alexandroff'}
\end{figure}

\begin{remark} There is also a connected version of the previous example (see Figure \ref{alexandroff'}).
\end{remark}

\begin{remark}
Let $X'=X\cup r(X)$, where $r$ is the reflection $(x,y)\mapsto(-x,y)$ in $\R^2$.
It is clear from the above that $qH_1(K)\to H_1\big(\R^2\but\{(0,0)\}\big)$ is trivial for each compact 
$K\subset X'$, and therefore $pH_1(X')\to H_1\big(\R^2\but\{(0,0)\}\big)$ is trivial.
It might also seem that $qH_1(X')\to H_1\big(\R^2\but\{(0,0)\}\big)$ must be nontrivial,
which would imply that $\tau_1^{X'}$ is not surjective; but actually this is not the case.
Indeed, let \[U_n=\R^2\but\big(\{(0,0)\}\cup\{(\pm\tfrac1i,0)\mid i=n+\tfrac12,n+\tfrac32,\dots\}\cup
\{\pm\tfrac1i\mid i=\tfrac32,\tfrac52,\dots,n-\tfrac12\}\x\R\big).\]
Each $U_i$ is an open neighborhood of $X'$ in $\R^2\but\{(0,0)\}$, and $U_0\subset U_1\subset\dots$.
Hence $qH_1(X')\to H_1\big(\R^2\but\{(0,0)\}\big)$ factors through $\lim_i H_1(U_i)$, but the latter is
isomorphic to the inverse limit of the inclusions $\dots\to\bigoplus_{i=2}^\infty\Z\to\bigoplus_{i=1}^\infty\Z$,
which is zero.
\end{remark}

\begin{figure*}
\includegraphics[width=15.7cm]{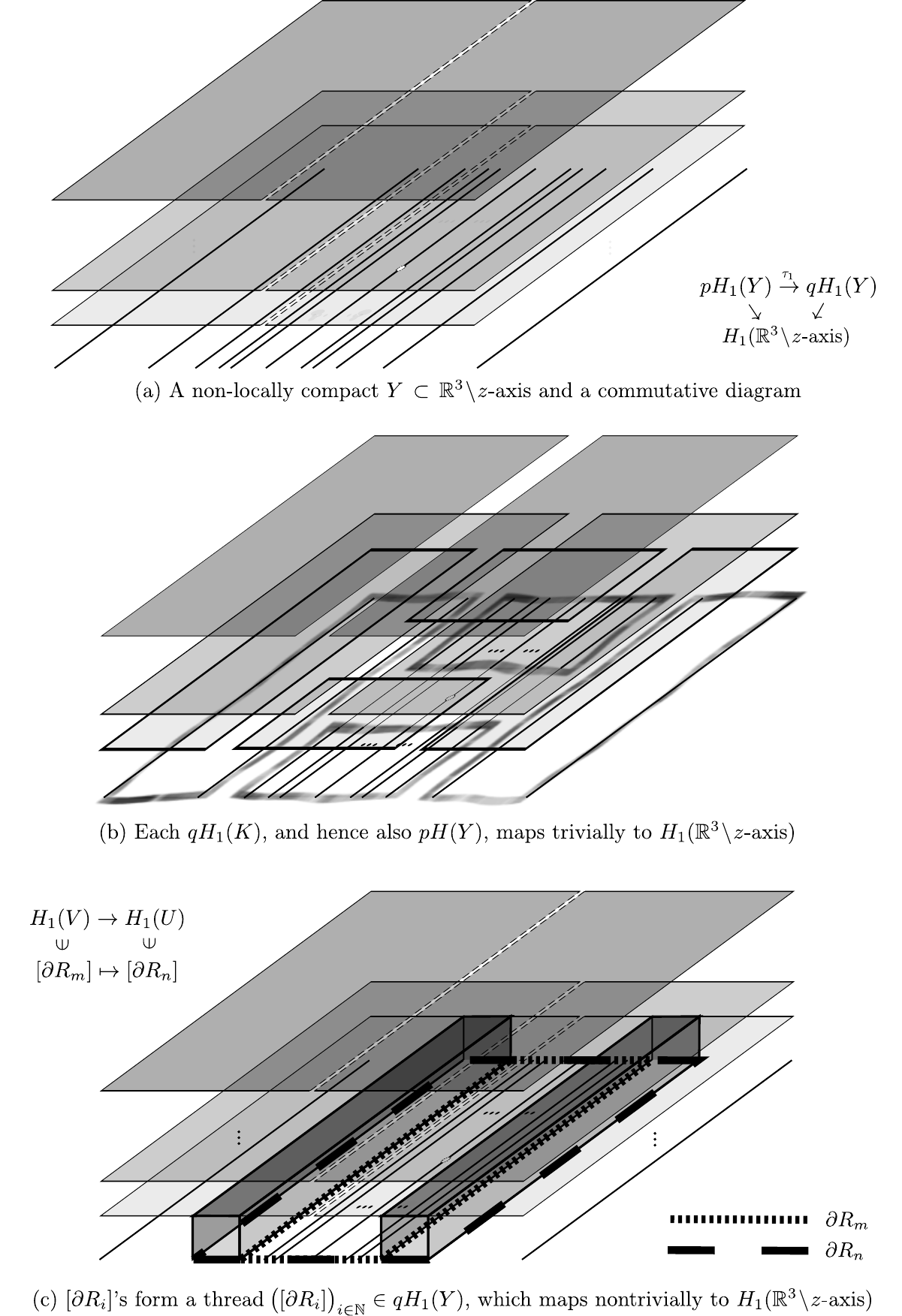}
\caption{$\tau_1$ need not be surjective}
\label{mishchenko}
\end{figure*}

\begin{example} \label{10.5'} (See Figure \ref{mishchenko}.)
Let $X'$ be as above and let $Y=X'\x\{0\}\cup\big([-1,0)\cup(0,1]\big)\x[-1,1]\x\frac1{\N}$,
where $\frac1{\N}=\{\frac1n\mid n\in\N\}$.
Let us note that $Y$ is not locally compact.
To prove that $\tau_1^Y$ is not surjective it suffices to show that 
$qH_1(K)\to H_1(\R^3\but z\text{-axis})$ is trivial for each compact $K\subset Y$, but 
$qH_1(Y)\to H_1(\R^3\but z\text{-axis})$ is nontrivial (where the ``$z$-axis'' is the line $\{(0,0)\}\x\R$).

Indeed, if $K\subset Y$ is compact, then it is disjoint
\begin{itemize}
\item from $[-\frac1k,\frac1k]\x[-\frac1k,\frac1k]\x[0,\frac1k]$ 
for some $k\in\N$;
\item from $\big([-\frac1{k+1/3},-\frac1{k+2/3}]\cup[\frac1{k+2/3},\frac1{k+1/3}]\big)\x[-1,1]\x[0,\frac1l]$ 
for some $l\in\N$, $l\ge k$; 
\item from $[-\eps,\eps]\x[-1,1]\x[\frac1{l-1},1]$ for some $\eps>0$.
\end{itemize} 
Then $K$ is contained in the union of six pairwise disjoint boxes (i.e.\ rectangular parallelepipeds),
\[\begin{aligned}
Q_{001}&=[\eps,1]\x[-1,1]\x[\tfrac1{l-1},1],
&Q_{0,0,-1}&=[-1,-\eps]\x[-1,1]\x[\tfrac1{l-1},1],\\
Q_{100}&=[\tfrac1{k+1/3},1]\x[-1,1]\x[0,\tfrac1l],
&Q_{-1,0,0}&=[-1,-\tfrac1{k+1/3}]\x[-1,1]\x[0,\tfrac1l],\\
Q_{010}&=[-\tfrac1{k+2/3},\tfrac1{k+2/3}]\x[\tfrac1k,1]\x[0,\tfrac1l],
&Q_{0,-1,0}&=[-\tfrac1{k+2/3},\tfrac1{k+2/3}]\x[-1,-\tfrac1k]\x[0,\tfrac1l],\\
\end{aligned}\]
none of which meets the $z$-axis.
Hence $qH_1(K)\to H_1(\R^3\but z\text{-axis})$ is trivial.
 
If $U$ is an open neighborhood of $Y$ in $\R^3$, then $U$ contains 
$[-\frac1n,\frac1n]\x\{1,-1\}\x\{0\}$ for some $n\in\N$.
Since $Y$ contains $\{-\frac1n,\frac1n\}\x[-1,1]\x\{0\}$, the entire boundary of the rectangle 
$R_n\bydef [-\frac1n,\frac1n]\x[1,-1]\x\{0\}$ lies in $U$.
Its clockwise oriented fundamental class $[\partial R_n]\in H_1(\partial R_n)$ maps to the clockwise oriented 
generator $\beta$ of $H_1(\R^3\but z\text{-axis})$.
If $V\subset U$ is another open neighborhood of $Y$ in $\R^3$, it contains $\partial R_m$ for some $m\in\N$, 
$m>n$.
Then $U$ also contains $\partial R_m$, and $[\partial R_n]-[\partial R_m]=[\partial R_{mn}\x\{0\}]\in H_1(U)$, 
where $R_{mn}=\big([-\frac1n,-\frac1m]\cup[\frac1m,\frac1n]\big)\x[-1,1]$.
Since $U$ contains $\partial R_{mn}\x\{0\}$, it also contains $(\partial R_{mn})\x[0,\frac1k]$ for some $k\in\N$.
On the other hand, $Y$ contains the two rectangles $R_{mn}\x\{\frac1k\}$, and so the cycle 
$\partial R_{mn}\x\{0\}$ bounds the chain $R_{mn}\x\{\frac1k\}\cup(\partial R_{mn})\x[0,\frac1k]$ in $U$.
Hence the bonding map $H_1(V)\to H_1(U)$ sends $[\partial R_m]$ to $[\partial R_n]$.
Thus the classes $[\partial R_n]\in H_1(U)$, $n=n(U)$, form a thread $x\in qH_1(Y)$, which maps
onto a generator $\beta\in H_1(\R^3\but z\text{-axis})$.
\end{example}

\begin{remark} In the previous example, the homomorphism $H_1(Y)\to qH_1(Y)$ is not surjective.
(This will follow from Corollary \ref{epi-epi}(a), but we can also see it directly.)
Indeed, we have $H_1(Y)=\colim H_1(K)$ over all compact $K\subset X$, and the same argument 
as above works to show that $H_1(K)\to H_1(\R^3\but z\text{-axis})$ is trivial for every 
compact $K\subset X$.
Hence $H_1(X)\to H_1(\R^3\but z\text{-axis})$ is trivial, and the assertion follows.
\end{remark}

\subsection{Examples: cohomology}

The previous examples along with the duality imply that $\tau^1_{S^2\but X}$ is non-injective and 
$\tau^1_{S^3\but Y}$ is non-surjective, where $S^n$ stands for the one-point compactification of $\R^n$.
Here are more explicit examples of this type, which also have lower dimensions, one additionally being 
local compacta.

\begin{example} \label{10.5''} The map $\tau^1_{X'}$ is non-injective, where $X'=X\cup r(X)$ is as above, 
a local compactum.

Indeed, let $\beta'\in H^1(\R^2\but\{(0,0)\})$ be a generator.
If $K\subset X'$ is compact, then it is disjoint from $[-\frac1k,\frac1k]\x[-\frac1k,\frac1k]$ for some $k\in\N$,
and consequently lies in the union of four pairwise disjoint rectangles, $R_{10}$, $r(R_{10})$, 
$R_{01}\cup r(R_{01})$ and $R_{0,-1}\cup r(R_{0,-1})$, none of which contains the origin $(0,0)$.
Hence $\beta'$ maps trivially to $H^1(K)$, and consequently also trivially to $qH^1(X')$.

On the other hand, if $U$ is an open neighborhood of $X'$ in $\R^2$, then there exists an $n\in\N$ such that $U$ 
contains $\partial R_n$. 
Since $\beta'$, or rather its image in $\Hom\big(H_1(\R^2\but\{(0,0)\}),\,\Z\big)$, evaluates nontrivially
on $[\partial R_n]$, the image of $\beta'$ in $H^1(K)$ for any compact $K\subset U$ containing $\partial R_n$
also evaluates nontrivially on $[\partial R_n]$.
Hence $\beta'$ maps nontrivially to $qH^1(U)$, and consequently also nontrivially to $pH^1(X')$.
\end{example}

\begin{example} \label{10.5'''} Let 
$Z=X\x\{0\}\cup\{0\}\x[-1,1]\x\big([-1,0)\cup(0,1]\big)\cup[0,1]\x[-1,1]\x\frac1{\pm\N}$,
where $\frac1{\pm\N}=\{\frac1{\pm n}\mid n\in\N\}$.
Let us note that $Z$ is not locally compact.
To prove that $\tau^1_Z$ is not surjective, let us note that $Z$ contains the boundary of the rectangle 
$R\bydef \{0\}\x[-1,1]\x[-1,1]$, and let $\gamma\in H^1(\partial R)$ be a generator.
It suffices to show that $\gamma$ lies in the image of $qH^1(Z)\to H^1(\partial R)$, but not in
that of $pH^1(Z)\to H^1(\partial R)$.

If $U$ is an open neighborhood of $Z$ in $\R^3$, then $U$ contains $[0,\frac1n]\x\{1,-1\}\x[-1,1]$ 
for some $n\in\N$ as well as $\{\frac1n\}\x[-1,1]\x[-\frac1m,\frac1m]$ for some $m\in\N$. 
Then $U$ contains the intersection $V_{mn}$ of $[0,\infty)\x\R^2$ with the boundary of the box 
$[-\frac1n,\frac1n]\x[-1,1]\x[-\frac1m,\frac1m]$.
Also $U$ contains the two rectangles $W_m\bydef \{0\}\x[-1,1]\x\big([-1,-\frac1m]\cup[\frac1m,1]\big)$.
It follows that $\partial R$ is null-homotopic in $U$, and more specifically in $V_{mn}\cup W_m$.
Since $\gamma$ evaluates nontrivially on $[\partial R]\in H_1(\partial R)$, it follows that $\gamma$ is not
in the image of $H^1(V_{mn}\cup W_m)$.
Therefore it is not in the image of $qH^1(U)$, and consequently also not in the image of $pH^1(Z)$.

If $K\subset Z$ is compact, then it is disjoint from $[0,\frac1k]\x[-\frac1k,\frac1k]\x[-\frac1k,\frac1k]$ 
for some $k\in\N$, as well as from $[\frac1{k+2/3},\frac1{k+1/3}]\x[-1,1]\x[-\frac1l,\frac1l]$ for some 
$l\in\N$, $l\ge k$.
Hence $K$ lies in the union of two disjoint polyhedra: the box 
$B_{kl}=[\tfrac1{k+1/3},1]\x[-1,1]\x[-\frac1l,\tfrac1l]$ and the union $C_{kl}$ of the $yz$ plane $\{0\}\x\R^2$
with four pairwise disjoint boxes
\[\begin{aligned}
Q_{01}&=[0,1]\x[-1,1]\x[\tfrac1{l-1},1],
&Q_{0,-1}&=[0,1]\x[-1,1]\x[-1,-\tfrac1{l-1}],\\
Q_{10}&=[0,\tfrac1{k+2/3}]\x[\tfrac1k,1]\x[-\tfrac1l,\tfrac1l],
&Q_{-1,0}&=[0,\tfrac1{k+2/3}]\x[-1,-\tfrac1k]\x[-\tfrac1l,\tfrac1l].\\
\end{aligned}\]
Let $\rho\:\R^3\but\R\x\{(0,0)\}=\R\x\mathbb C\but\R\x\{0\}\to S^1$ be defined by 
$(x,\rho e^{i\phi})\mapsto e^{i\phi}$, and let $c\:\R^3\to S^1$ be the constant map $(x,y,z)\mapsto 1$.
We define a map $f_{kl}\:B_{kl}\cup C_{kl}\to\partial R$ by $f_{kl}=c|_{B_{kl}}\cup\rho|_{C_{kl}}$.
If $K'\supset K$ is another compact subset of $Z$, then $K'$ lies in $B_{k'l'}\cup C_{k'l'}$ for some $k'$ and $l'$.
Let us note that $f_{kl}$ and $f_{k'l'}$ agree on $B_{kl}\cap B_{k'l'}$ and on $C_{kl}\cap C_{k'l'}$.
Since $B_{k'l'}$ is disjoint from the $yz$ plane, $A\bydef B_{k'l'}\cap C_{kl}$ lies in the union of the four 
pairwise disjoint boxes $B_{k'l'}\cap Q_{ij}$.
Hence $f_{kl}|_A$ is null-homotopic, i.e.\ homotopic to $c|_A=f_{k'l'}|_A$.
Similarly, if $A'=B_{kl}\cap C_{k'l'}$, then $f_{k'l'}|_{A'}$ is homotopic to $c|_{A'}=f_{kl}|_{A'}$.
Consequently $f_{kl}|_K$ and $f_{k'l'}|_K$ are homotopic.
Thus the homotopy classes $[f_{kl}|_K]\in[K,S^1]\simeq H^1(K)$ form a thread $\delta\in qH^1(U)$,
which clearly maps onto a generator $\gamma\in H^1(\partial R)$.
\end{example}  

\begin{remark} In the previous example, the homomorphism $H^1(Z)\to qH^1(Z)$ is not surjective.
(This will follow from Corollary \ref{epi-epi}(b), but we can also see it directly.)

Indeed, suppose that $\gamma\in H^1(\partial R)$ lies in the image of $H^1(Z)$.
Since $H^1(Z)\simeq [Z,S^1]$, there exists a map $f\:Z\to S^1$ such that $f|_{\partial R}$ is 
not null-homotopic.
Let $M=Z\cup[0,1]\x\{-1,1\}\x[-1,1]\cup\frac1{\N}\x[-1,1]\x[-1,1]$.
Clearly $Z$ is closed in $M$.
Since $S^1$ is an ANR, $f$ extends to a map $\bar f\:U\to S^1$ for some open neighborhood $U$ 
of $Z$ in $M$.
But clearly $U$ contains $V_{mn}\cup W_m$ for some $m,n\in\N$.
Hence $\partial R$ is null-homotopic in $U$, so we get a contradiction.
\end{remark}

If $X$ is as in Example \ref{10.5} and $X'$ is as in Example \ref{10.5''}, then $S^2\but X$ 
and $S^2\but X'$ are coronated polyhedra (see \cite{M-V}*{Example \ref{cor:co-locally-compact}}) 
such that $\tau^1_{S^2\but X}$ and $\tau_0^{S^2\but X'}$ are not injective.
It is not hard to construct one-dimensional examples of this type:

\begin{example} \label{compactohedral example}
Let $X=\{\frac1n\mid n\in\N\}\x[-1,1]\cup\{0\}\x\{-1,1\}\subset\R^2$.
Also let $r$ be the reflection $(x,y)\mapsto(-x,y)$ of $\R^2$, and let $X'=X\cup r(X)$.
Clearly, $X$ and $X'$ are non-locally-compact coronated polyhedra.
Similarly to Example \ref{10.5}, $\tau_0^X\:pH_0(X)\to qH_0(X)$ is not injective; 
and similarly to Example \ref{10.5''}, $\tau^1_{X'}\:pH^1(X')\to qH^1(X')$ is not injective.
\end{example}

\subsection{Surjectivity of $\tau$}\label{surjectivity}

The universal property of direct limits implies that the inclusion induced homomorphisms
$q^kH_n(K)\to q^kH_n(X)$, where $K$ runs over compact subsets of $X$, factor through
a unique homomorphism \[\tau_n^{(k)}\:p^kH_n(X)\to q^kH_n(X);\] and the homomorphisms
$q^kH^n(P_\alpha)\to q^kH^n(X)$ induced by the maps $p^\infty_\alpha\:X\to P_\alpha$ factor through
a unique homomorphism \[\tau^n_{(k)}\:p^kH^n(X)\to q^kH^n(X).\]

\begin{theorem}\label{10.11} (a) If $X$ is a local compactum, then
there is a short exact sequence 
\[0\xr{\phantom{\tau^n}} p^1H^{n-1}(X)\xr{\tau^{n-1}_{(1)}}q^1H^{n-1}(X)\xr{\phantom{\tau}} 
pH^n(X)\xr{\tau^n}qH^n(X)\xr{\phantom{\tau^n}} 0.\]

(b) If $X$ is a coronated polyhedron, then there is a short exact sequence 
\[0\xr{\phantom{\tau_n}} p^1H_{n+1}(X)\xr{\tau_{n+1}^{(1)}}q^1H_{n+1}(X)\xr{\phantom{\tau}} 
pH_n(X)\xr{\tau_n}qH_n(X)\xr{\phantom{\tau_n}} 0.\]
\end{theorem}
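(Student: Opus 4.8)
The plan is to obtain each five-term sequence by applying the snake lemma to a morphism of short exact sequences whose middle terms coincide and whose two outer vertical maps are the comparison homomorphisms $\tau$. For (a) I would first record the two short exact sequences with common middle term the \v Cech cohomology $H^n(X)$. The ``pseudo'' sequence is Corollary \ref{milnor-ses''},
\[0\to p^1H^{n-1}(X)\to H^n(X)\to pH^n(X)\to 0,\]
obtained by applying the exact functor $\colim_C$ to the Milnor sequence of Corollary \ref{milnor-ses'}(b) for the nerves $N_C$; the ``quasi'' sequence is Corollary \ref{10.10'},
\[0\to q^1H^{n-1}(X)\to H^n(X)\to qH^n(X)\to 0,\]
whose proof uses local compactness through Theorem \ref{10.10}. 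For (b) the situation is formally identical with homology in place of cohomology: the pseudo sequence is again Corollary \ref{milnor-ses''}, now obtained by applying $\colim_K$ to Corollary \ref{milnor-ses'}(a), and the quasi sequence is Corollary \ref{co10.10'}, whose proof uses that $X$ is a compactohedron via a $(K,P)$-resolution and Theorem \ref{co10.10}. In either case the middle group is the honest (co)homology of $X$, literally the same group in both rows.

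Next I would assemble the commutative ladder
\[\begin{CD}
0 @>>> p^1H^{n-1}(X) @>>> H^n(X) @>>> pH^n(X) @>>> 0\\
@. @V\tau^{n-1}_{(1)}(X)VV @| @V\tau^n(X)VV @.\\
0 @>>> q^1H^{n-1}(X) @>>> H^n(X) @>>> qH^n(X) @>>> 0
\end{CD}\]
with the identity in the middle, together with the corresponding ladder for (b). The vertical maps are the comparison homomorphisms defined just before the statement, and the point is that they intertwine the two sequences. Commutativity of the right-hand square says that the projection $H^n(X)\to pH^n(X)$ followed by $\tau^n(X)$ equals the restriction map $H^n(X)\to qH^n(X)$; unwinding the definition of $\tau^n(X)$ as the map induced by the canonical maps $X\to N_C$, this reduces to the naturality built into Corollaries \ref{milnor-ses'} and \ref{10.10'}, i.e.\ to the compatibility of \v Cech restriction-to-compacta with the maps to nerves. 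Commutativity of the left-hand square is the same assertion one phantom level higher.

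With the ladder in hand the conclusion follows from the snake lemma: since the middle vertical map is the identity, its kernel and cokernel vanish, so the six-term exact sequence degenerates to the assertions that $\tau^{n-1}_{(1)}(X)$ is injective, that $\tau^n(X)$ is surjective, and that the connecting homomorphism is an isomorphism $\ker\tau^n(X)\xrightarrow{\simeq}\coker\tau^{n-1}_{(1)}(X)$. Defining the middle arrow $q^1H^{n-1}(X)\to pH^n(X)$ as the projection onto $\coker\tau^{n-1}_{(1)}(X)$ followed by the inverse connecting isomorphism and the inclusion of $\ker\tau^n(X)$ then yields exactly the asserted five-term exact sequence, and the argument for (b) is verbatim the same.

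The step I expect to cost the most work is the commutativity of the ladder, and especially so for (b): there the quasi sequence is built from a $(K,P)$-resolution $\dots\to R_2\to R_1$ whose terms are not subspaces of $X$, so matching it against the pseudo sequence built from the nerves of open covers requires threading the comparison through the pro-categorical isomorphism of Theorem \ref{cech-cofinal} and the restricted naturality of Theorem \ref{co10.10}, and checking that the resulting outer vertical maps really are $\tau_{n+1}^{(1)}(X)$ and $\tau_n(X)$.
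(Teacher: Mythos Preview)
Your proposal is correct and matches the paper's proof essentially line for line: the paper builds exactly the same ladder with the identity in the middle and reads off injectivity, surjectivity, and $\ker\tau^n(X)\simeq\coker\tau^{n-1}_{(1)}(X)$ (phrased via the third isomorphism theorem rather than the snake lemma, which is the same thing here). For the commutativity you flagged as the main work, the paper's hints are: in (a), restrict to star-finite covers so that the nerves are locally compact polyhedra and the naturality of Corollary~\ref{10.10'} applies to the maps $X\to N_C$; in (b), use that the naturality of Corollary~\ref{co10.10'} holds for arbitrary maps $K\to X$ when $K$ is compact, which handles the inclusions of compacta directly without needing to unwind the $(K,P)$-resolution.
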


Let us recall that $\tau^1_X$ fails to be surjective for a certain separable metrizable $X$ 
(Example \ref{10.5'''})
and fails to be injective for a certain local compactum (Example \ref{10.5''}).
Also, $\tau_1^X$ fails to be surjective for a certain separable metrizable $X$ (Example \ref{10.5'})
and $\tau_0^X$ fails to be injective for a certain coronated polyhedron $X$ 
(Example \ref{compactohedral example}).

\begin{proof}[Proof. (a)] Since star-finite covers are cofinal in the directed set of open covers of $X$, 
the naturality in Theorem \ref{10.10'} implies that its short exact sequence combines with that of 
Corollary \ref{milnor-ses''} into a commutative diagram
$$\begin{CD}
0@>>>p^1H^{n-1}(X)@>>>H^n(X)@>>>pH^n(X)@>>>0\\
@.@V\tau^{n-1}_{(1)}VV@|@V\tau^nVV@.\\
0@>>>q^1H^{n-1}(X)@>>>H^n(X)@>>>qH^n(X)@>>>0
\end{CD}$$
This diagram implies that $\tau^n$ is surjective, $\tau^{n-1}_{(1)}$ is injective, and 
(using the third isomorphism theorem $(G/K)/(H/K)\simeq G/H$) that
$\ker\tau^n\simeq\coker\tau^{n-1}_{(1)}$.
\end{proof}

\begin{proof}[Proof. (b)] This is similar, using the naturality in Theorem \ref{co10.10'}.
\end{proof}

\begin{corollary}
If $X$ is a $d$-dimensional coronated polyhedron, then $\tau_d^X$ is an isomorphism.%
\footnote{We may similarly conclude that if $X$ is a local compactum, 
then $\tau^0_X$ is an isomorphism, but a stronger assertion was already proved more easily 
in Proposition \ref{0-cohomology}.}
\end{corollary}

For homology of local compacta, for cohomology of coronated polyhedra and for spaces that are neither
coronated polyhedra nor local compacta, the proof of Theorem \ref{10.11} yields only the following

\begin{corollary} \label{epi-epi} Let $X$ be a metrizable space.

(a) $pH_n(X)\xr{\tau_n^X}qH_n(X)$ is surjective if and only if so is 
$H_n(X)\xr{\gamma_n^X}qH_n(X)$.

(b) $pH^n(X)\xr{\tau^n_X}qH^n(X)$ is surjective if and only if so is 
$H^n(X)\xr{\gamma^n_X}qH^n(X)$.
\end{corollary}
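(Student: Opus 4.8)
The plan is to deduce the equivalence as a purely formal consequence of the short exact sequences of Corollary \ref{milnor-ses''}, which hold for \emph{every} metrizable space, without invoking the quasi-(co)homology sequences of Corollaries \ref{10.10'} and \ref{co10.10'} (whose hypotheses of local compactness, resp.\ the compactohedron condition, are exactly what is being dropped here). First I would record that Corollary \ref{milnor-ses''} supplies, for any metrizable $X$, the natural surjections $\delta_n(X)\:H_n(X)\to pH_n(X)$ and $\delta^n(X)\:H^n(X)\to pH^n(X)$, namely the right-hand maps in its two short exact sequences.

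Next I would verify the key factorizations
\[\gamma_n(X)=\tau_n(X)\circ\delta_n(X),\qquad \gamma^n(X)=\tau^n(X)\circ\delta^n(X),\]
which hold essentially by construction. In homology, $\gamma_n(X)$ is the colimit over the compacta $K\subset X$ of the composites $H_n(K)\to qH_n(K)\to qH_n(X)$, where the first map is the surjection of Corollary \ref{milnor-ses'}(a) and the second is inclusion-induced. The colimit of the first maps is precisely $\delta_n(X)$, since $pH_n(X)=\colim_K qH_n(K)$ and direct limit is exact; the colimit of the inclusion-induced maps $qH_n(K)\to qH_n(X)$ is precisely $\tau_n(X)$; so their composite is $\tau_n(X)\circ\delta_n(X)$. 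The cohomology case is identical, replacing the compacta $K$ by the nerves $N_C$ of open covers $C$, the inclusions by the canonical maps $X\to N_C$, and Corollary \ref{milnor-ses'}(a) by (b).

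Finally, the equivalence follows by diagram chasing, using that $\delta_n(X)$ and $\delta^n(X)$ are surjective. If $\tau_n(X)$ is surjective, then $\gamma_n(X)=\tau_n(X)\circ\delta_n(X)$ is a composite of surjections and hence surjective. Conversely, if $\gamma_n(X)$ is surjective, then $\im\tau_n(X)\supset\im\big(\tau_n(X)\circ\delta_n(X)\big)=\im\gamma_n(X)=qH_n(X)$, so $\tau_n(X)$ is surjective; likewise in cohomology with $\tau^n$, $\gamma^n$, $\delta^n$. I do not anticipate a serious obstacle: the only point requiring care is to confirm that the two colimit descriptions---of $\gamma$ on one hand and of $\tau\circ\delta$ on the other---agree index by index, which is immediate once the natural maps of Corollaries \ref{milnor-ses'} and \ref{milnor-ses''} are unwound.
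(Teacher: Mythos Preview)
Your argument is correct and is essentially the same as the paper's. The paper simply notes that the corollary is what survives from the proof of Theorem \ref{10.11} once one drops the bottom short exact sequence: the top row (from Corollary \ref{milnor-ses''}) still gives the surjection you call $\delta$, and the commutativity of the right-hand square in that diagram is precisely your factorization $\gamma=\tau\circ\delta$, from which the equivalence is immediate.
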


\begin{remark} $\gamma_n^X$ and $\gamma^n_X$ are special cases of homomorphisms
$H_n(\lim X_\alpha)\xr{\gamma}\lim H_n(X_\alpha)$ and 
$H^n(\bigcup_\alpha X_\alpha)\xr{\gamma}\lim H^n(X_\alpha)$.
These were widely studied and some conclusive results have been obtained, albeit in situations that are 
far from what we pursue here.

If $K_\alpha$ are compact Hausdorff spaces%
\footnote{The authors of \cite{KS} and \cite{Md2} do not state 
the compactness hypothesis, but it is needed to ensure that their homology theories $H_*^{KS}$, $H_*^{Md}$ 
are isomorphic to the standard one (in the sense of the uniqueness theorems, see 
\cite{M-II}*{\S\ref{axi:general-uniqueness}}).
Indeed, their theories both satisfy the ``dual'' universal coefficient formula with respect to \v Cech 
cohomology, $0\to\Ext\big(H^{n+1}(X),\,G\big)\to H_n^{KS/Md}(X;\,G)\to\Hom\big(H^n(X),\,G\big)\to 0$, and 
therefore are non-standard (i.e.\ do not satisfy $\bigsqcup$-additivity) already on non-compact polyhedra, 
e.g.\ $H_{-1}^{KS}(\N)\simeq H_{-1}^{Md}(\N)\simeq\Ext(\prod_{\N}\Z,\,\Z)\ne 0$, where $\N$ is the countable 
discrete space (concerning $\Ext(\prod_{\N}\Z,\,\Z)$, see references in \cite{M00}*{Remark \ref{book:UCF}(2)}), 
and also without using any deep results, 
$H_2^{KS}(\N\x\R P^2;\,\Z/2)\simeq H_2^{Md}(\N\x\R P^2;\,\Z/2)\simeq\Hom(\prod_{\N}\Z/2,\,\Z/2)$, which 
is uncountable (since $\prod_{\N}\Z/2$ is a vector space over $\Z/2$) and so is not isomorphic to 
$\bigoplus_{\N}\Z/2$.} 
indexed by a directed set and $K=\lim K_\alpha$, then there is a long exact sequence of 
Mdzinarishvili \cite{Md}*{Bespiel 3} (see also \cite{Md2}*{Example 5})
\[\scalebox{0.8}{$
\dots\to\invlim^3 H_{n+2}(K_\alpha)\to\invlim^1 H_{n+1}(K_\alpha)\to H_n(K)\xr{\gamma}
\invlim H_n(K_\alpha)\to\invlim^2 H_{n+1}(K_\alpha)\to\invlim^4 H_{n+2}(K_\alpha)\to\dots
$}\]
and an exact sequence of Kuz'minov--Shvedov \cite{KS}*{Theorem 5(1)}%
\[\scalebox{0.95}{$
0\to\invlim^1\Hom\big(H^{n+1}(K_\alpha),\,\Z\big)\to H_n(K)\xr{\gamma}
\invlim H_n(K_\alpha)\to\invlim^2\Hom\big(H^{n+1}(K_\alpha),\,\Z\big)\to 0.
$}\]
If either the indexing set is countable (and therefore has a cofinal sequence) or each $K_\alpha$ is 
a compact polyhedron (and therefore has finitely generated homology), then 
$\invlim^p H_i(K_\alpha)=0$ for $p\ge 2$ and we recover a Milnor-type short exact sequence
(see Remark \ref{lim2fg} above and \cite{M00}*{Remark \ref{book:lim2seq}}).

If $U_\alpha$ are open subsets of $X\bydef \bigcup_\alpha U_\alpha$ (and there are no restrictions on 
the space $X$), then one could expect a Mdzinarishvili-type exact sequence in cohomology 
(see \cite{Sk5}*{\S\S8.1.1--8.1.2}); at any rate, there is a spectral sequence of the form 
$E_2^{pq}=\invlim^p H^q(U_\alpha)\Rightarrow H^{p+q}(X)$ which implies that
$\gamma\:H^0(X)\to\invlim H^0(U_\alpha)$ is an isomorphism and there is an exact sequence 
\[
0\to\invlim^1 H^0(U_\alpha)\to H^1(X)\xr{\gamma}
\invlim H^1(U_\alpha)\to\invlim^2 H^0(U_\alpha)\to H^2(\invlim U_\alpha).
\]
\cite{Sk5}*{\S8.1.5}, \cite{Sk92}.
One special case of this situation occurs when $X$ is a simplicial complex (with metric 
or weak topology) and each $U_\alpha$ is the union of the open stars of all vertices of some 
subcomplex $P_\alpha$.
In this case, the above spectral sequence is due to Bousfield--Kan \cite{BK} and Araki--Yosimura \cite{AY},
and there is indeed a (finite) Mdzinarishvili-type long exact sequence \cite{Oh}
\[\scalebox{0.8}{$
\dots\to\invlim^3 H^{n-2}(P_\alpha)\to\invlim^1 H^{n-1}(P_\alpha)\to H^n(X)\xr{\gamma}
\invlim H^n(P_\alpha)\to\invlim^2 H^{n-1}(P_\alpha)\to\invlim^4 H^{n-2}(P_\alpha)\to\dots.
$}\]
If either the indexing set is countable (and therefore has a cofinal sequence), or the subcomplexes
$P_\alpha$ are finite (and so have finitely generated cohomology), then 
$\invlim^p H^i(P_\alpha)=0$ for $p\ge 2$ and we again recover a Milnor-type short exact sequence
(see Remark \ref{lim2fg} above and \cite{M00}*{Remark \ref{book:lim2seq}}).

It should also be noted that Mdzinarishvili himself \cite{Md}*{Bespiel 1}, \cite{Md3} established 
a (finite) long exact sequence
\[\scalebox{0.8}{$
\dots\to\invlim^3 H^{n-2}_s(K_\alpha)\to\invlim^1 H^{n-1}_s(K_\alpha)\to H^n_s(X)\xr{\gamma}
\invlim H^n_s(K_\alpha)\to\invlim^2 H^{n-1}_s(K_\alpha)\to\invlim^4 H^{n-2}_s(K_\alpha)\to\dots
$}\]
where $X$ is an arbitrary space, $K_\alpha$ are its compact subsets and $H^*_s$ is singular cohomology.
\end{remark}

\subsection{Finitely generated derived limit}
Given an inverse sequence $\dots\xr{f_1} G_1\xr{f_0} G_0$ of groups, 
we may think of it as the colimit of the direct system of all inverse 
sequences $\dots\to H^\alpha_1\to H^\alpha_0$ such that each $H^\alpha_i$ is a finitely generated subgroup of 
$G_i$ and the bonding maps $H^\alpha_{i+1}\to H^\alpha_i$ are the restrictions of the $f_i$.
(Here the $G_i$ are possibly non-abelian, though the abelian case suffices for the purposes of the present paper.
We do not assume that the $G_i$ are countable, but this case would suffice for all current and expected 
applications.)
We define $\derlimfg G_i\bydef \colim_\alpha\derlim_i H^\alpha_i$ and 
$\invlimfg G_i\bydef \colim_\alpha\lim_i H^\alpha_i$.

\begin{proposition}\label{9.A} The natural homomorphism $\invlimfg G_i\to\lim G_i$ is an isomorphism, and
the natural map $\tau_\fg\:\derlimfg G_i\to\derlim G_i$ is injective.
\end{proposition}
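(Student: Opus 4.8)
The plan is to work throughout with the directed set $A$ whose elements $\alpha$ are the compatible systems $(H^\alpha_i)_i$ of finitely generated subgroups $H^\alpha_i\subseteq G_i$ (so that $f_i(H^\alpha_{i+1})\subseteq H^\alpha_i$), ordered by levelwise inclusion. This is directed, since for $\alpha,\beta\in A$ the subgroups $H^\gamma_i:=\langle H^\alpha_i,H^\beta_i\rangle$ are again finitely generated and satisfy $f_i(H^\gamma_{i+1})\subseteq H^\gamma_i$. Moreover $\colim_\alpha H^\alpha_i=G_i$ for each $i$, because any $g\in G_i$ is contained in the finitely generated compatible system given by $H^\alpha_j=\langle f_j\cdots f_{i-1}(g)\rangle$ for $j\le i$ and $H^\alpha_j=0$ for $j>i$.

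For the first assertion, the inclusions $H^\alpha_i\emb G_i$ induce maps $\lim_i H^\alpha_i\to\lim_i G_i$ that are compatible over $A$, hence a map $\invlimfg G_i\to\lim G_i$ on the colimit. First I would check injectivity: a thread of $\lim_i H^\alpha_i$ is literally a family $(x_i)$ with $x_i\in H^\alpha_i\subseteq G_i$, so if it vanishes in $\lim G_i$ then every $x_i=0$ and it already vanishes in $\lim_i H^\alpha_i$; since the image in the colimit of a representative equals the image of that representative under the single stage map, this forces the colimit map to be injective. For surjectivity, given a thread $(g_i)\in\lim G_i$ I would take the cyclic system $H^\alpha_i:=\langle g_i\rangle$, which is compatible because $f_i(g_{i+1})=g_i$ gives $f_i(\langle g_{i+1}\rangle)=\langle g_i\rangle$; then $(g_i)$ is a thread of $\lim_i H^\alpha_i$ mapping to $(g_i)$.

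The second assertion is the substantive one, and I would prove it via the description $\derlim H_i=\coker(d^H)$, where $d^H\colon\prod_i H_i\to\prod_i H_i$ sends $(h_i)$ to $(h_i-f_i(h_{i+1}))$ (and similarly $d^G$ for $G_i$). Represent a class $x\in\ker\tau_\fg$ by some $\alpha$ and some $(y_i)\in\prod_i H^\alpha_i$. Saying that $x$ maps to $0$ in $\derlim G_i$ means exactly that $(y_i)\in\im d^G$, i.e.\ there are $g_i\in G_i$ with
\[y_i=g_i-f_i(g_{i+1})\qquad\text{for all }i.\]
The crucial move is then to enlarge $\alpha$ to the system $\beta$ defined by $H^\beta_i:=\langle H^\alpha_i,\,g_i\rangle$. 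This is still finitely generated at each level --- here it is essential that there is only one witness $g_i$ per level $i$ --- and it is compatible, since $f_i(g_{i+1})=g_i-y_i\in H^\beta_i$ because $g_i\in H^\beta_i$ and $y_i\in H^\alpha_i\subseteq H^\beta_i$. By construction $(y_i)\in\im d^{H^\beta}$, so the image of $x$ under the bonding map $\derlim H^\alpha_i\to\derlim H^\beta_i$ is zero, whence $x=0$ in the colimit $\derlimfg G_i$.

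The only real obstacle is the step just highlighted: one must check that absorbing the boundary-witnesses $g_i$ keeps each level finitely generated, and this works precisely because a single element is added per level. I expect the remainder to be routine bookkeeping with the directed set $A$ and the $\coker$ description of $\lim^1$; the non-abelian case, should it be wanted, runs along the same lines with $y_i=g_i\,f_i(g_{i+1})^{-1}$ in place of the additive formula.
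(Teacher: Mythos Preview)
Your proof is correct and shares its key idea with the paper's: the enlargement $H^\beta_i=\langle H^\alpha_i,\,g_i\rangle$ is exactly what the paper does. The packaging differs slightly. The paper handles both assertions at once by applying the exact functor $\colim_\alpha$ to the five-term exact sequence
\[0\to\lim_i H^\alpha_i\to\lim_i G_i\to\lim_i G_i/H^\alpha_i\to\derlim_i H^\alpha_i\to\derlim_i G_i,\]
and reduces everything to showing that $\colim_\alpha\lim_i G_i/H^\alpha_i=0$; your enlargement step is precisely what kills a thread of cosets in this colimit. You instead treat the two claims separately, using the $\coker$ description of $\lim^1$ for the second. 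This is a bit more hands-on but equally valid, and arguably more transparent for a reader who hasn't internalized the five-term sequence. The paper's route has the mild advantage of covering the non-abelian case uniformly (the cited exact sequence holds for pointed sets), whereas your argument is written additively and would need the reformulation you sketch at the end.
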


\begin{proof} A thread of the inverse sequence $\dots\to G_1/H^\alpha_1\to G_0/H^\alpha_0$
of right cosets is a sequence of the form
$\gamma=(\dots,g_1H^\alpha_1,g_0H^\alpha_0)$.
Each $g_iH^\alpha_i$ lies in the finitely generated subgroup
$J^\alpha_i=\left<g_i,H^\alpha_i\right>$.
Each bonding map $G_{i+1}\to G_i$ sends $g_{i+1}$ into $g_ih_i$ for some
$h_i\in H^\alpha_i$, and hence $J^\alpha_{i+1}$ into $J^\alpha_i$.
Thus the $J^\alpha_i$ are of the form $H^\beta_i$ for some
$\beta=\beta(\alpha)$, so the thread $\gamma$ maps trivially into
$\colim_\alpha\lim\nolimits_i G_i/H^\alpha_i$.
The assertion now follows from the exact sequence
\[0\to\lim\nolimits_i H^\alpha_i\to\lim\nolimits_i G_i\to\lim\nolimits_i G_i/H^\alpha_i\to
\derlim\nolimits_i H^\alpha_i\to\derlim\nolimits_i G_i\]
(see \cite[3.2(d$'$)]{M1}) by applying the exact functor $\colim_\alpha$.
\end{proof}

\begin{example} Clearly, $\derlimfg\big(\dots\xr{p}\Z\xr{p}\Z\big)=
\derlim\big(\dots\xr{p}\Z\xr{p}\Z\big)\simeq\hat\Z_p/\Z$.

On the other hand, $\derlim\Big(\dots\emb\bigoplus_{i=2}^\infty\Z\emb\bigoplus_{i=1}^\infty\Z\Big)\simeq
\prod_{i=1}^\infty\Z/\bigoplus_{i=1}^\infty\Z\ne 0$, but
$\derlimfg\Big(\dots\emb\bigoplus_{i=2}^\infty\Z\emb\bigoplus_{i=1}^\infty\Z\Big)=0$.
Indeed, let us consider more generally an inverse sequence $\dots\emb F_1\emb F_0$ of pure inclusions 
between free abelian groups.
(A subgroup $H$ of a group $G$ is called {\it pure} if for any $g\in G$ and $n\in\Z$ such that $ng\in H$
one has $g\in H$.)
For a finitely generated subgroup $H$ of a free abelian group $F$, the purification
$\bar H\bydef \{g\in F\mid \exists n\in\mathbb Z: ng\in H\}$ of $H$ is finitely generated
(by tensoring with $\Q$; compare \cite{Fu}*{\S26, remarks after (e) and after 26.2}).
Hence every subsequence $\dots\emb H_1\emb H_0$ of finitely generated subgroups in the inverse sequence
$\dots\emb F_1\emb F_0$ is included in the subsequence $\dots\emb\bar H_1\emb\bar H_0$ of purifications, 
which are finitely generated.
Since each $\bar H_{i+1}$ is pure in $F_{i+1}$, which in turn is pure in $F_i$, we get that 
$\bar H_{i+1}$ is pure in $F_i$ and hence also in $\bar H_i$.
But then clearly $\dots\emb\bar H_1\emb\bar H_0$ satisfies the Mittag--Leffler condition, and hence
$\derlim\big(\dots\emb\bar H_1\emb\bar H_0\big)=0$.
\end{example}

\begin{remark} In connection with the previous example, let us note that there exists an inverse sequence
of embeddings $\dots\to G_1\to G_0$ between free abelian groups of rank 2, and an inverse sequence
$\dots\to H_1\to H_0$ of their {\it pure} subgroups of rank $1$ such that the map $\derlim H_i\to\derlim G_i$
is not injective \cite{M00}*{Example \ref{book:lattice2}}.
\end{remark}

\subsection{Kernel of $\tau$: cohomology}

\begin{theorem}\label{10.13} Let $X$ be a local compactum,
$X=\bigcup K_i$, where each $K_i$ is compact, $K_i\subset\Int K_{i+1}$.
Then 

(a) $p^1H^{n-1}(X)\simeq\derlimfg H^{n-1}(K_i)$;

(b) there is a short exact sequence
\[0\xr{\phantom{\tau}}\derlim H^{n-1}(K_i)/\derlimfg H^{n-1}(K_i)\xr{\phantom{\tau}} 
pH^n(X)\xr{\tau^n_X}qH^n(X)\xr{\phantom{\tau}} 0.\]
\end{theorem}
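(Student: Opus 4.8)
The plan is to deduce (b) from (a) together with the exact sequence of Theorem~\ref{10.11}(a), and to prove (a) by exhibiting $p^1H^{n-1}(X)$ and $\derlimfg H^{n-1}(K_i)$ as \emph{the same} subgroup of $q^1H^{n-1}(X)$. First I would record two identifications. Since the $K_i$ are cofinal in the directed set of compact subsets of $X$, passing to this cofinal sequence gives $qH^n(X)\simeq\lim_i H^n(K_i)$ and $q^1H^{n-1}(X)\simeq\derlim H^{n-1}(K_i)$. By Theorem~\ref{10.11}(a) there is an exact sequence
\[0\to p^1H^{n-1}(X)\xrightarrow{\tau^{n-1}_{(1)}(X)}q^1H^{n-1}(X)\to pH^n(X)\xrightarrow{\tau^n(X)}qH^n(X)\to 0,\]
in which $\tau^{n-1}_{(1)}(X)$ is injective, while by Proposition~\ref{9.A} the map $\tau_\fg\:\derlimfg H^{n-1}(K_i)\to\derlim H^{n-1}(K_i)$ is also injective. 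Thus both $p^1H^{n-1}(X)$ and $\derlimfg H^{n-1}(K_i)$ embed in $\derlim H^{n-1}(K_i)$, and (a) amounts to showing these two subgroups coincide; granting this, $\ker\tau^n(X)\simeq\coker\tau^{n-1}_{(1)}(X)\simeq\derlim H^{n-1}(K_i)/\derlimfg H^{n-1}(K_i)$, which is (b).

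For the geometry I would restrict to countable locally finite open covers $C$ of $X$ (cofinal by paracompactness and second countability), so that each nerve $N_C$ is a countable locally finite complex. For such $C$ the canonical map $X\to N_C$ carries each $K_i$ into the finite subcomplex $N_{C|_{K_i}}$, and these form an increasing cofinal sequence of finite subcomplexes with union $N_C$. Hence by Corollary~\ref{milnor-ses'}(b) we get $q^1H^{n-1}(N_C)\simeq\derlim_i H^{n-1}(N_{C|_{K_i}})$, and the map $q^1H^{n-1}(N_C)\to q^1H^{n-1}(X)$ induced by $X\to N_C$ becomes the map $\derlim_i H^{n-1}(N_{C|_{K_i}})\to\derlim_i H^{n-1}(K_i)$ induced by the maps $K_i\to N_{C|_{K_i}}$. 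Setting $H^C_i:=\im\big(H^{n-1}(N_{C|_{K_i}})\to H^{n-1}(K_i)\big)$, each $H^C_i$ is finitely generated (as $N_{C|_{K_i}}$ is finite) and the bonding maps carry $H^C_{i+1}$ into $H^C_i$, so $\{H^C_i\}_i$ is one of the finitely generated subsystems defining $\derlimfg$. Since $\lim^1$ carries a surjection of towers to a surjection, the above map factors as a surjection onto $\derlim_i H^C_i$ followed by the map into $\derlim_i H^{n-1}(K_i)$, so its image lies in $\im\tau_\fg$. Taking the colimit over $C$ gives $\im\tau^{n-1}_{(1)}(X)\subseteq\im\tau_\fg$.

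The reverse inclusion is where the main obstacle lies. Given an arbitrary finitely generated compatible subsystem $\{H^\alpha_i\}_i$ of $\{H^{n-1}(K_i)\}_i$, I must produce a single cover $C$ with $H^\alpha_i\subseteq H^C_i$ for all $i$. Here I would use that $H^{n-1}$ is \v Cech cohomology, so each finitely generated $H^\alpha_i\subseteq H^{n-1}(K_i)=\colim_D H^{n-1}(N_D)$ is realized by the nerve of some finite open cover $D_i$ of the compactum $K_i$, i.e.\ $H^\alpha_i$ lies in the image of $H^{n-1}(N_{D_i})\to H^{n-1}(K_i)$. The hard part will be to glue the $D_i$ into one locally finite open cover $C$ of $X$ whose restriction $C|_{K_i}$ refines $D_i$ for every $i$, exploiting the nested structure $K_i\subset\Int K_{i+1}$ to patch covers on the successive ``annuli'' $K_{i+1}\setminus\Int K_i$. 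Since refinement induces a map of nerves compatible with the maps from $K_i$, refining $D_i$ only enlarges the captured image, so $H^\alpha_i\subseteq\im\big(H^{n-1}(N_{D_i})\to H^{n-1}(K_i)\big)\subseteq H^C_i$. Then the inclusion $\{H^\alpha_i\}_i\subseteq\{H^C_i\}_i$ factors $\derlim_i H^\alpha_i\to\derlim_i H^C_i$, whose image in $\derlim_i H^{n-1}(K_i)$ lies in $\im\tau^{n-1}_{(1)}(X)$ by the previous paragraph. Taking the colimit over $\alpha$ yields $\im\tau_\fg\subseteq\im\tau^{n-1}_{(1)}(X)$, completing (a); and (b) follows as explained above.
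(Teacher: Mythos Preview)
Your overall architecture matches the paper's: reduce to showing that $\im\tau^{n-1}_{(1)}(X)$ and $\im\tau_\fg$ coincide as subgroups of $\derlim H^{n-1}(K_i)$, then read off (b) from Theorem~\ref{10.11}(a). Your ``easy'' inclusion $\im\tau^{n-1}_{(1)}(X)\subseteq\im\tau_\fg$ is fine (the paper states it even more briefly, just observing that nerves of star-finite covers are locally compact polyhedra whose compact subpolyhedra have finitely generated cohomology).

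The gap is in the reverse inclusion. Your plan is to realize each $H^\alpha_i$ by a finite cover $D_i$ of $K_i$ and then ``glue the $D_i$ into one locally finite open cover $C$ of $X$ whose restriction $C|_{K_i}$ refines $D_i$ for every $i$''. But this is generally impossible: if $U\in C$ satisfies $U\subset K_1$, then $U\cap K_i=U$ for all $i\ge 1$, so the condition ``$C|_{K_i}$ refines $D_i$'' forces $U$ to lie inside some element of $D_i$ \emph{for every} $i$. Since you chose the $D_i$ independently, nothing prevents $D_i|_{K_1}$ from becoming arbitrarily fine as $i\to\infty$, and then no nonempty open $U$ can satisfy this. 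Patching on annuli does not help: each element of $C$ meeting $K_1$ must survive infinitely many refinement conditions. Notice also that your argument never uses the hypothesis $h_i(H^\alpha_{i+1})\subset H^\alpha_i$; this is a warning sign, since without that compatibility the statement is false.

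The paper's route is genuinely different here. It does not try to produce a single cover. Instead it writes $X=\lim_j P_j$ with the $P_j$ locally compact polyhedra, builds the extended mapping telescope $P_{[0,\infty]}$, and identifies $p^1H^{n-1}(X)$ with $\colim_{\sigma\in\N^\N}\derlim_i G_{i,\sigma|_{[i]}}$ for certain finitely generated groups $G_{is}$ coming from compact pieces $Q_{i,[s,\infty]}$ of the telescope. The heart of the proof is Lemma~\ref{10.14}: given finitely generated $F_i\subset G_i$ with $h_i(F_{i+1})\subset F_i$, one constructs $\sigma$ \emph{inductively}, at step $k$ using the compatibility $h_k(F_{k+1})\subset F_k$ together with a cochain-level pullback property (Lemma~\ref{pullback-lemma}) to lift the generators of $F_{k+1}$ into a group that surjects onto the already-chosen $G_{k,s}$, and only then choosing $s_{k+1}$. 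This inductive lift, powered by the compatibility condition, is exactly what your gluing step is missing. If you want to salvage your cover-based approach, you would need an analogue of Lemma~\ref{10.14} at the level of covers---an inductive choice of $D_{k+1}$ that is coherent with $D_k$ on $K_k$ \emph{and} captures $F_{k+1}$---and that in turn requires something like the pullback lemma, not just annular patching.
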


\begin{proof} Let $G_i=H^{n-1}(K_i)$.
We will prove that the image of $\tau^{n-1}_{(1)}\:p^1H^{n-1}(X)\to q^1H^{n-1}(X)$ 
coincides with the subgroup $\derlimfg G_i$ of $\derlim G_i=q^1H^{n-1}(X)$.
This together with Theorem \ref{10.11} implies both (a) and (b).

For each locally compact separable polyhedron $P$ we have $q^1H^{n-1}(P)=\derlim H^{n-1}(L_i)$, where 
the $L_i$ are compact polyhedra with $\bigcup L_i=P$ and each $L_i\subset\Int L_{i+1}$.
But the groups $H^{n-1}(L_i)$ are finitely generated, so $\derlim H^{n-1}(L_i)=\derlimfg H^{n-1}(L_i)$.
Since star-finite covers are cofinal in the directed set of open covers of $X$ (this only needs $X$ to be
separable metrizable), it follows that the image of $\tau^{n-1}_{(1)}\:p^1H^{n-1}(X)\to q^1H^{n-1}(X)$ 
lies in $\derlimfg G_i$.

Conversely, since $X$ is a local compactum, it is the limit of an inverse sequence $\dots\xr{p_1} P_1\xr{p_0} P_0$
of locally compact separable polyhedra and proper maps (see \cite{M00}*{Theorem \ref{book:isbell}}).
Then there exist compact subpolyhedra $Q_{ij}\subset P_j$ such that each $Q_{ij}\subset\Int Q_{i+1,j}$, each 
$P_j=\bigcup_{i=1}^\infty Q_{ij}$, each $p_j^{-1}(Q_{ij})=Q_{i,j+1}$, and each $K_i=\lim_j Q_{ij}$.
(In fact, we may assume that $P_0=[0,\infty)$ and each $Q_{i0}=[0,i]$.)
Let $Q_{i,[0,\infty]}$ denote the compactified mapping telescope of $\dots\to Q_{i1}\to Q_{i0}$ and 
for a $k$-tuple $s=(s_1,\dots,s_k)\in\N^k$, let 
$Q_{k,[s,\infty]}=Q_{1,[s_1,\infty]}\cup Q_{2,[s_2,\infty]}\cup\dots\cup Q_{k,[s_k,\infty]}
\subset Q_{k,[0,\infty]}$.
(Let us note that $Q_{i,[s_i,\infty]}\cap Q_{i+1,[s_{i+1},\infty]}=Q_{i,[s_{i+1},\infty]}$ if $s_{i+1}\ge s_i$.
It would suffice to consider only $s$ such that each $s_{i+1}\ge s_i$, but we will do without this assumption
in order to simplify notation.)
Also let $P_{[0,\infty]}$ be the extended mapping telescope of $\dots\to P_1\to P_0$ and for a sequence 
$\sigma=(\sigma_i)_{i\in\N}\in\N^{\N}$ let $P_{[\sigma,\infty]}\subset P_{[0,\infty]}$ be the union of 
the increasing chain $Q_{1,[\sigma|_{[1]},\infty]}\subset Q_{2,[\sigma|_{[2]},\infty]}\subset\dots$, 
where $\sigma|_{[i]}$ denotes the $i$-tuple of the first $i$ members of $\sigma$.

Since $Q_{i,[0,\infty]}$ deformation retracts onto $Q_{i0}$, we have $H^*(Q_{i,[0,\infty]}),\,Q_{i0})=0$
and consequently $G_i=H^{n-1}(K_i)\simeq H^n(Q_{i,[0,\infty]},\,K_i\cup Q_{i0})$.
We have $G_i\simeq\colim_j H^{n-1}(Q_{i,[j,\infty]})$ and consequently 
$G_i\simeq\colim_{s\in\N^i} G_{is}$, where \[G_{is}=H^{n-1}(Q_{i,[s,\infty]})\simeq 
H^n(Q_{i,[0,\infty]},\,Q_{i,[s,\infty]}\cup Q_{i0})\] and the poset $\N^i$ is 
the product of $k$ copies of the poset $\N$ (in other words, $s\le t$ if $s_k\le t_k$ for each $k\le i$).
(Indeed, $\N^i$ has a cofinal sequence consisting of the constant $i$-tuples $n=(n,\dots,n)$.)
On the other hand, $p^1H^{n-1}(X)\simeq\colim_{\sigma\in\N^{\N}}\Gamma_\sigma$,
where \[\Gamma_\sigma=q^1H^{n-1}(P_{[\sigma,\infty]})\simeq\derlim G_{i\sigma|_{[i]}}\simeq
q^1H^n(P_{[0,\infty]},\,P_{[\sigma,\infty]}\cup P_0)\] 
and the poset $\N^{\N}$ is the product of countably many copies of the poset $\N$ (in other words, 
$\sigma\le\tau$ if $\sigma_k\le\tau_k$ for each $k\in\N$).
Let us note that the the groups $G_{is}\simeq H^n(Q_{i,[0,\,\max s]},\,Q_{i,[s,\,\max s]}\cup Q_{i0})$ 
are finitely generated.

Let $h_i\:G_{i+1}\to G_i$ be the restriction homomorphism.
To show that \[\tau^{n-1}_{(1)}\:\colim_{\sigma\in\N^{\N}}\derlim_i G_{i\sigma|_{[i]}}\to\derlim_i G_i\] 
is onto $\derlimfg G_i=\colim_F\derlim F_i$, where $F$ ranges over all collections
of finitely generated subgroups $F_i\subset G_i$, $i=1,2,\dots$, with
$h_i(F_{i+1})\subset F_i$ for each $i$, it suffices to show that every such
$F$ is contained in an $F'$ for which there exists a sequence $\sigma\in\N^{\N}$
such that $\prod_i F_i'$ equals the image of $\prod_i G_{i\sigma|_{[i]}}$ in $\prod_i G_i$.
Since each $G_{i\sigma|_{[i]}}$ is finitely generated, $F'_i$ may be {\it defined}
as its image in $G_i$, provided that it contains $F_i$.
Thus it remains to prove the following lemma. 

\begin{lemma}\label{10.14}
Let $F_i\subset G_i$, $i\in\N$, be finitely generated subgroups such that each $h_i(F_{i+1})\subset F_i$.
Then there exists a sequence $\sigma\in\N^{\N}$ such that each $F_i$, $i\in\N$, lies 
in the image of $G_{i\sigma|_{[i]}}$ in $G_i$.
\end{lemma}

\begin{proof}
Let $\alpha_1,\dots,\alpha_r$ be a set of generators of $F_1$.
Since $G_1\simeq\colim_{n\in\N} G_{1n}$, each $\alpha_i$ lies in the image of $G_{1n_i}$ for some $n_i$.
Consequently all of the $\alpha_i$, and hence also $F_1$, lie in the image of $G_{1s_1}$, where 
$s_1=\max(n_1,\dots,n_r)$.

Assuming that $s=(s_1,\dots,s_k)\in\N^k$ is a $k$-tuple such that $F_k$ lies in the image of $G_{ks}$ in $G_k$, 
we will construct an $s_{k+1}\in\N$ such that $F_{k+1}$ lies in the image of $G_{k+1,\,s*s_{k+1}}$ in $G_{k+1}$, 
where $s*s_{k+1}$ denotes the $(k+1)$-tuple $(s_1,\dots,s_{k+1})$.
The assertion of the lemma will then follow by an inductive application of this construction.

Let $\alpha_1,\dots,\alpha_r$ be a set of generators of $F_{k+1}$.
Then each $h_k(\alpha_i)\in F_k$, so by our hypothesis $h_k(\alpha_i)$ is the image of some 
$\beta_i\in G_{ks}$.
Thus each $\alpha_i$ lies in the image of the pullback of the diagram
\smallskip
\[\begin{CD}
@.G_{ks}\\
@.@VVV\\
G_{k+1}@>h_k>>G_k.
\end{CD}\]
\smallskip

\noindent
But the latter diagram is precisely the lower right corner of the commutative diagram
\bigskip
\[\begin{CD}
H^n(Q_{k+1,[0,\infty]},\,Q_{k,[s,\infty]}\cup K_{k+1}\cup Q_{k+1,0})@>>>
H^n(Q_{k,[0,\infty]},\,Q_{k,[s,\infty]}\cup Q_{k0})\\
@VVV@VVV\\
H^n(Q_{k+1,[0,\infty]},\,K_{k+1}\cup Q_{k+1,0})@>>>H^n(Q_{k,[0,\infty]},\,K_k\cup Q_{k0}),
\end{CD}\]
\smallskip

\noindent
which is a special case of the diagram in Lemma \ref{pullback-lemma}(a) below --- except that
$Q_{k+1,[0,\infty]}$ is generally not a polyhedron.
However, we can rewrite the previous diagram in terms of cohomology with compact support of
locally compact polyhedra:
\bigskip
\[\begin{CD}
H^n_c(Q_{k+1,[0,\infty)},\,Q_{k,[s,\infty)}\cup Q_{k+1,0})@>>>
H^n_c(Q_{k,[0,\infty)},\,Q_{k,[s,\infty)}\cup Q_{k0})\\
@VVV@VVV\\
H^n_c(Q_{k+1,[0,\infty)},\,Q_{k+1,0})@>>>H^n_c(Q_{k,[0,\infty)},\,Q_{k0}),
\end{CD}\]
\smallskip

\noindent
and the proof of Lemma \ref{pullback-lemma}(a) works for compactly supported cohomology.
Hence each $\alpha_i$ is the image of some
$\gamma_i\in H^n(Q_{k+1,[0,\infty]},\,Q_{k,[s,\infty]}\cup Q_{k+1,0})$.
But the latter group is isomorphic to $\colim_{n\in\N} G_{k+1,\,s*n}\simeq
\colim_{n\in\N}H^n(Q_{k+1,[0,\infty]},\,Q_{k,[s*n,\infty]}\cup Q_{k+1,0})$.
Hence each $\gamma_i$ lies in the image of $G_{k+1,\,s*n_i}$ for some $n_i$.
Consequently all of the $\gamma_i$ lie in the image of $G_{k+1,\,s*s_{k+1}}$, where 
$s_{k+1}=\max(n_1,\dots,n_r)$.
But then all of the $\alpha_i$, and hence also $F_{k+1}$, lie in the image of 
$G_{k+1,\,s*s_{k+1}}$ in $G_{k+1}$.
\end{proof}
\end{proof}
 
\begin{lemma} \label{pullback-lemma}
Let $X$ be a polyhedron and $Z\subset Y\subset X$ and $W\subset X$ its subpolyhedra, and let
$W'=W\cap Y$.
In the commutative diagrams
\smallskip
\[\begin{CD}
H^n(X,\,Z\cup W)@>>>H^n(Y,\,Z\cup W')\\
@VVV@Vj^*VV\\
H^n(X,W)@>i^*>>H^n(Y,W')
\end{CD}\quad\text{and}\quad
\begin{CD}
H_n(Y,W')@>i_*>>H_n(X,W)\\
@Vj_*VV@VVV\\
H_n(Y,\,Z\cup W')@>>>H_n(X,\,Z\cup W),
\end{CD}\]
\smallskip

(a) the upper left cohomology group surjects onto the pullback of $i^*$ and $j^*$;

(b) the pushout of $i_*$ and $j_*$ injects into the lower right homology group.
\end{lemma}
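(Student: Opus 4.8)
The plan is to derive both parts from the long exact sequences of the two triples $(X,Z\cup W,W)$ and $(Y,Z\cup W',W')$, related by the inclusion of triples $(Y,Z\cup W',W')\hookrightarrow(X,Z\cup W,W)$. The first preliminary step is to record a common ``excised'' term: since $Z\subset Y$ we have $Z\cap W'=Z\cap W$, so excision (valid for subpolyhedral pairs, where Steenrod--Sitnikov homology and \v Cech cohomology agree with singular theory) gives isomorphisms $H^*(Z\cup W,W)\simeq H^*(Z,Z\cap W)\simeq H^*(Z\cup W',W')$, and similarly in homology. Under these isomorphisms the inclusion-induced restriction between the two excised terms is the identity. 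I would then invoke naturality of the triple sequences --- in particular of the connecting homomorphisms $\delta_X,\delta_Y$ in cohomology and $\partial_X,\partial_Y$ in homology, and of the structure maps --- with respect to the above inclusion of triples.

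For (a), write $\pi_X\colon H^n(X,Z\cup W)\to H^n(X,W)$ and $r\colon H^n(X,Z\cup W)\to H^n(Y,Z\cup W')$ for the two arrows out of the upper left corner; commutativity of the given square places $(\pi_X(c),r(c))$ in the pullback $\{(a,b)\mid i^*a=j^*b\}$. Given such a pair $(a,b)$, I would first note that under the excision identification $\rho_X(a)=\rho_Y(i^*a)=\rho_Y(j^*b)=0$ (the last equality by exactness of the $Y$-triple at $H^n(Y,W')$), so by exactness of the $X$-triple $a$ lifts to some $c_0$ with $\pi_X(c_0)=a$. Then $j^*(r(c_0))=i^*(\pi_X(c_0))=i^*a=j^*b$, so $r(c_0)-b\in\ker j^*=\im\delta_Y$, say $r(c_0)-b=\delta_Y(e)$. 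Letting $\tilde e\in H^{n-1}(Z\cup W,W)$ correspond to $e$ under excision and setting $c=c_0-\delta_X(\tilde e)$, naturality of the connecting maps gives $r(c)=r(c_0)-\delta_Y(e)=b$, while $\pi_X\delta_X=0$ gives $\pi_X(c)=a$. Thus $c\mapsto(a,b)$, proving surjectivity onto the pullback.

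For (b), I would run the dual chase. Let $\pi_X'\colon H_n(X,W)\to H_n(X,Z\cup W)$ and $r'\colon H_n(Y,Z\cup W')\to H_n(X,Z\cup W)$ be the two arrows into the lower right corner, so the pushout maps via $(u,v)\mapsto\pi_X'(u)+r'(v)$; well-definedness uses $\pi_X'i_*=r'j_*$. Suppose $\pi_X'(u)+r'(v)=0$. Applying $\partial_X$, and using naturality $\partial_X r'=\kappa\circ\partial_Y$ (with $\kappa$ the excision isomorphism) together with $\partial_X\pi_X'=0$, forces $\kappa\partial_Y(v)=0$, hence $\partial_Y(v)=0$, so $v=j_*(y)$ for some $y$ by exactness of the $Y$-triple. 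Put $x_0=-y$, so $-j_*(x_0)=v$; commutativity $\pi_X'i_*=r'j_*$ then yields $\pi_X'(u-i_*x_0)=0$, whence $u-i_*x_0=\iota(w)\in\im\!\big(H_n(Z\cup W,W)\to H_n(X,W)\big)$. Transporting $w$ through excision to $w'\in H_n(Z\cup W',W')$ and using naturality $\iota(w)=i_*(\iota'w')$ gives $u=i_*(x_0+\iota'w')$, while $j_*\iota'=0$ gives $-j_*(x_0+\iota'w')=v$. So with $x=x_0+\iota'w'$ we have $(u,v)=(i_*x,-j_*x)$, i.e.\ $(u,v)$ vanishes in the pushout, which proves injectivity.

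The routine inputs are the commutativity of the defining squares and the exactness of the two triple sequences. The one point needing care --- and the crux that makes the chase close up --- is the excision identification of $H^*(Z\cup W,W)$ with $H^*(Z\cup W',W')$ together with the fact that, under it, the restriction maps appearing on the excised edges of the naturality squares for $\delta$ and $\partial$ are isomorphisms; this hinges precisely on $Z\subset Y$, which gives $Z\cap W'=Z\cap W$. Since the whole argument uses only these formal properties (the exact sequence of a triple, excision, and naturality of connecting homomorphisms), it applies verbatim to compactly supported cohomology, as is needed in the proof of Theorem \ref{10.13}.
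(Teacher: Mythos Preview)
Your proof is correct and complete. The diagram chase works exactly as you describe: the excision identification $H^*(Z\cup W,W)\simeq H^*(Z,Z\cap W)\simeq H^*(Z\cup W',W')$ (and its homology analogue), valid because $Z\subset Y$ forces $Z\cap W'=Z\cap W$, is precisely what makes the restriction maps on the ``excised'' terms isomorphisms, and this is the hinge on which both chases turn.

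Your approach differs genuinely from the paper's. The paper argues directly at the simplicial cochain level: given cocycles $x,y$ representing $\alpha,\beta$, it adjusts $x$ by a coboundary so that $i^*(x')=j^*(y)$ holds on the nose, then observes that the cochain-level square is a literal pullback of sets, so a common lift $z$ exists and is automatically a cocycle. You instead work entirely at the level of (co)homology, using the long exact sequences of the triples $(X,Z\cup W,W)$ and $(Y,Z\cup W',W')$ together with the excision isomorphism and naturality of the connecting homomorphisms. The paper's argument is shorter and more hands-on; yours is more formal and has the advantage of being manifestly theory-independent: once excision and the exact sequence of a triple are available, nothing else is needed, so the extension to compactly supported cohomology (required in the proof of Theorem~\ref{10.13}) is immediate rather than something one must check separately at the chain level.
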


\begin{proof} We will prove part (a); part (b), which is not used, can be proved similarly.

We need to show that if $i^*(\alpha)=j^*(\beta)$, then $\alpha$ and $\beta$ have a common point-inverse.

Let us triangulate $X$ so that $Y$, $Z$ and $W$ are subcomplexes, and let us represent $\alpha$ and
$\beta$ by simplicial cocycles $x$ and $y$.
Then $i^*(x)-j^*(y)=\delta c$ for some $c\in C^{n-1}(Y,W')$.
The cochain-level $i^*$ is clearly surjective, so there exists a $\tilde c\in C^{n-1}(X,W)$ such that 
$i^*(\tilde c)=c$.
Let $x'=x-\delta\tilde c$.
Then $x'$ is a cocycle and $i^*(x')=j^*(y)$.
Since the cochain-level version of the diagram is obviously a pullback diagram, $x'$ and $y$ have 
a common point-inverse $z\in C^n(X,\,Z\cup W)$.
It must be a cocycle, since it maps to the cocycle $x'$ under a monomorphism of chain complexes.
Then clearly $[z]$ is the desired common point-inverse of $[x']=[x]=\alpha$ and $[y]=\beta$.
\end{proof}

\subsection{Kernel of $\tau$: homology}

\begin{theorem}\label{10.13'} Let $X$ be a coronated polyhedron and let $\dots\xr{f_2} R_2\xr{f_1} R_1$ be 
its sequential resolution.
Then

(a) $p^1H_n(X)\simeq\derlimfg H_{n+1}(R_i)$;

(b) there is a short exact sequence
\[0\xr{\phantom{\tau}}\derlim H_{n+1}(R_i)/\derlimfg H_{n+1}(R_i)\xr{\phantom{\tau}}
pH_n(X)\xr{\tau_n^X}qH_n(X)\xr{\phantom{\tau}}0.\]
\end{theorem}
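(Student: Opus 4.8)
The plan is to follow the proof of Theorem \ref{10.13}, but with compacta $K\i X$ playing the role of the nerves of star-finite covers and the given $(K,P)$-resolution playing the role of the inverse system $P_j$. Set $G_i=H_{n+1}(R_i)$. By Theorem \ref{co10.10} the resolution computes $q^1H_{n+1}(X)\simeq\derlim G_i$ and $qH_n(X)\simeq\lim H_n(R_i)$, so Theorem \ref{10.11}(b) supplies an exact sequence
\[0\to p^1H_{n+1}(X)\xr{\tau_{n+1}^{(1)}(X)}\derlim G_i\to pH_n(X)\xr{\tau_n(X)}qH_n(X)\to 0.\]
Everything then reduces to showing that the image of $\tau_{n+1}^{(1)}(X)$ inside $\derlim G_i$ coincides with the subgroup $\derlimfg G_i$ (which is indeed a subgroup by Proposition \ref{9.A}): part (a) follows because $\tau_{n+1}^{(1)}(X)$ is injective, and part (b) because $\ker\tau_n(X)\simeq\coker\tau_{n+1}^{(1)}(X)=\derlim G_i/\derlimfg G_i$.

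For the inclusion $\im\tau_{n+1}^{(1)}(X)\i\derlimfg G_i$ I would use that $p^1H_{n+1}(X)=\colim_K q^1H_{n+1}(K)$ over compacta $K\i X=\lim_i R_i$. Given such a $K$, I would enclose its projections $\pi_i(K)\i R_i$ in compact subpolyhedra $M_i\i R_i$ chosen compatibly, taking $M_{i+1}$ to be a compact polyhedral neighbourhood of $\pi_{i+1}(K)$ inside $f_i^{-1}(\Int M_i)$, so that $f_i(M_{i+1})\i M_i$ and $K\i\lim_i M_i=:K'$. Since $K'$ is the limit of the compact polyhedra $M_i$, Corollary \ref{milnor-ses'}(a) gives $q^1H_{n+1}(K')\simeq\derlim H_{n+1}(M_i)$; as each $H_{n+1}(M_i)$ is finitely generated, the map $q^1H_{n+1}(K)\to q^1H_{n+1}(K')\to\derlim G_i$ induced by $M_i\emb R_i$ has image inside $\derlim$ of a compatible system of finitely generated subgroups of the $G_i$, hence inside $\derlimfg G_i$.

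The reverse inclusion $\derlimfg G_i\i\im\tau_{n+1}^{(1)}(X)$ is the main obstacle, and it is the homology analogue of Lemma \ref{10.14}. Since $\derlimfg G_i=\colim_F\derlim F_i$ over all compatible systems $F=(F_i)$ of finitely generated subgroups $F_i\i G_i$, it suffices to realize each such $F$ by a compactum in $X$; concretely, I would aim for compact subpolyhedra $M_i\i R_i$ with $f_i(M_{i+1})\i M_i$ and $F_i\i\im\big(H_{n+1}(M_i)\to G_i\big)$. Picking compact subpolyhedra $B_i\i R_i$ that realize $F_i$ and setting $M_i=\bigcup_{j\ge i}f_i^{(j)}(B_j)$, where $f_i^{(j)}\:R_j\to R_i$ is the composite bonding map, makes $f_i(M_{i+1})\i M_i$ automatic and keeps $F_i\i\im H_{n+1}(M_i)$ because $M_i\supseteq B_i$. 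The crux — the step I expect to be hardest — is to choose the $B_j$ inductively (as in the proof of Lemma \ref{10.14}) so that each $M_i$ is a genuine compact subpolyhedron. Here I would exploit the defining property of a $(K,P)$-resolution: $f_i$ carries $\Cl\big(R_{i+1}\but g_{i+1}(L_{i+1})\big)$ into the compactum $K_i$ and restricts to the embedding $g_{i+1}(L_i)\to g_i(L_i)$, so that for large $j$ the images $f_i^{(j)}(B_j)$ collapse into $K_i$ together with only a bounded part of $g_i(L_i)$, whence only finitely many of them contribute new simplices.

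Granting this, $K_F:=\lim_i M_i$ is a compactum contained in $X$, Corollary \ref{milnor-ses'}(a) yields $q^1H_{n+1}(K_F)\simeq\derlim H_{n+1}(M_i)$, and right exactness of $\derlim$ along the surjections $H_{n+1}(M_i)\twoheadrightarrow\im\big(H_{n+1}(M_i)\to G_i\big)\supseteq F_i$ shows that the image of $q^1H_{n+1}(K_F)\to\derlim G_i$ contains the image of $\derlim F_i$ in $\derlim G_i$. Passing to $\colim_F$ then gives $\derlimfg G_i\i\im\tau_{n+1}^{(1)}(X)$, completing the identification and hence (a) and (b). It is precisely this collapsing structure of the resolution — rather than any chain-level pushout argument — that lets the homological Lemma \ref{pullback-lemma}(b) remain unused.
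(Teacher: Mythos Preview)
Your overall strategy matches the paper's exactly: reduce via Theorem~\ref{10.11}(b) to identifying $\im\tau_{n+1}^{(1)}(X)$ with $\derlimfg G_i$, handle the two inclusions separately, and for the harder one produce compatible compact subpolyhedra of the $R_i$ carrying the given finitely generated subgroups.

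There are two points where your sketch and the paper diverge. For the easy inclusion, you propose to trap $\pi_i(K)$ in compact subpolyhedra $M_i\subset R_i$ with $f_i(M_{i+1})\subset M_i$; the paper instead represents the compactum $Q$ abstractly as $\lim Q_j$ for compact polyhedra $Q_j$ and invokes a shape-type lemma (Lemma~\ref{shape}) to get maps $Q_{n_i}\to R_i$ commuting up to homotopy. Your route is a bit more direct but you should check that the compatible $M_i$ actually exist: requiring $M_{i+1}\subset f_i^{-1}(\Int M_i)$ presupposes that compact sets in $R_{i+1}$ admit compact polyhedral neighbourhoods, which can fail if $R_{i+1}$ is not locally finite. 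The paper's approach sidesteps this.

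For the hard inclusion you have located the right difficulty but not the mechanism that resolves it. Your claim that ``for large $j$ the images $f_i^{(j)}(B_j)$ collapse into $K_i$ together with only a bounded part of $g_i(L_i)$'' is exactly what must be \emph{arranged} by the inductive choice of $B_j$, and the resolution structure alone does not force it: if $B_{j}$ happens to meet $g_j(L_i)$ in an uncontrolled way, its image under $f_i^{(j)}$ reproduces that piece inside $g_i(L_i)$. The paper's Lemma~\ref{10.14'} supplies the missing idea: writing $M_k=\Cl\big(R_k\setminus g_k(L_k)\big)$ and $M_k^+=\Cl\big(R_k\setminus g_k(L_{k-1})\big)$, the map of pairs $(R_{k+1},\,M_{k+1}^+\cup A_k')\to(R_k,\,M_k\cup A_k)$ (where $A_k'=g_{k+1}g_k^{-1}(A_k)$) is a homeomorphism off the second terms and hence an isomorphism on $H_n$; since $F_k$ dies in $H_n(R_k,A_k)$ and a fortiori in $H_n(R_k,\,M_k\cup A_k)$, the compatibility $(f_k)_*(F_{k+1})\subset F_k$ forces $F_{k+1}$ to die in $H_n(R_{k+1},\,M_{k+1}^+\cup A_k')$, so $A_{k+1}$ can be chosen inside $M_{k+1}^+\cup A_k'$. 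That single excision step is what guarantees $f_k(A_{k+1})\subset M_k\cup A_k$ and then $f_k f_{k+1}(A_{k+2})\subset K_k\cup f_k(A_{k+1})$, making $Q_k:=K_k\cup A_k\cup f_k(A_{k+1})$ compact. Once you insert this argument, your $M_i=\bigcup_{j\ge i}f_i^{(j)}(B_j)$ is contained in such a $Q_i$ and the rest of your plan (including the right-exactness-of-$\derlim$ step, which is valid for inverse \emph{sequences} since $\lim^2=0$) goes through. Your remark that Lemma~\ref{pullback-lemma}(b) is not needed is correct; the excision isomorphism above is what replaces the pullback lemma used in the cohomological Theorem~\ref{10.13}.
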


\begin{proof} 
Like in \cite{M-V}*{Proof of Theorem \ref{cor:main-ses2}}, it suffices to consider only one specific sequential 
resolution of $X$, which can be taken to be a pre-compactohedral inverse sequence.
Let $G_i=H_{n+1}(R_i)$.
Like in the proof of Theorem \ref{10.13}, by Theorem \ref{10.11} it suffices to prove that the image of 
$\tau_{n+1}^{(1)}\:p^1H_{n+1}(X)\to q^1H_{n+1}(X)$ coincides with the subgroup $\derlimfg G_i$ of 
$\derlim G_i=q^1H_{n+1}(X)$.

Let $Q\subset X$ be a compactum, and let us represent $Q$ as the limit of an inverse sequence 
$\dots\to Q_1\to Q_0$ of compact polyhedra.
Since the latter is a resolution of $Q$ (see \cite{M00}*{Lemma \ref{book:compact-resolution}}),
the inclusion $Q\subset X$ gives rise to an increasing sequence $(n_i)$ and maps $g_i\:Q_{n_i}\to R_i$ 
which commute up to homotopy with the bonding maps and with the projections of the inverse limits 
(see \cite{M00}*{Corollary \ref{book:shape}}).
These in turn yield a map $\derlim H_{n+1}(Q_{n_i})\to\derlim H_{n+1}(R_i)$, which is identified
with the inclusion induced map $q^1H_{n+1}(Q)\to q^1H_{n+1}(X)$.
Since the $Q_i$ are compact, $\derlim H_{n+1}(Q_{n_i})=\derlimfg H_{n+1}(Q_{n_i})$, and it follows that 
the image of $\tau_{n+1}^{(1)}\:p^1H_{n+1}(X)=\colim_Q q^1H_{n+1}(Q)\to q^1H_{n+1}(X)$ lies in the subgroup 
$\derlimfg G_i$.

Conversely, to show that $\tau_{n+1}^{(1)}$ is onto $\derlimfg G_i$ it suffices (by arguments similar
to those in the proof of Theorem \ref{10.13}) to prove the following lemma.

\begin{lemma} \label{10.14'} 
Let $X$ be a coronated polyhedron and let $\dots\xr{p_2} R_2\xr{p_1} R_1$ be a pre-compactohedral 
resolution of $X$.
Suppose that we are given finitely generated subgroups $F_i\subset H_n(R_i)$ such that each 
$(p_i)_*(F_{i+1})\subset F_i$.
Then there exist compact subpolyhedra $Q_i\subset R_i$ such that each $p_i(Q_{i+1})\subset Q_i$ and
each $F_i$ lies in the image of $H_n(Q_i)$. 
\end{lemma}

\begin{proof}
Let $K_i\subset L_i\subset R_i$ be the associated compact and closed polyhedra.
Since each generator of $F_1$ can be represented by a finite simplicial cycle, there exists a compact
subpolyhedron $A_1\subset R_1$ such that $F_1$ lies in the image of $H_n(A_1)$.

Let us assume that $A_k$ is a compact subpolyhedron of $R_k$ such that $F_k$ lies in the image of $H_n(A_k)$. 
Then the image of $F_k$ in $H_n(R_k,A_k)$ is trivial.
Therefore the image of $F_k$ in $H_n(R_k,\,L_k\cup A_k)$ is also trivial.
Let $E_{k+1}=p_k^{-1}(L_k\cup A_k)$.
Since $p_k$ restricts to a homeomorphism between $R_{k+1}\but E_{k+1}$ and $R_k\but(L_k\cup A_k)$, and even 
between their closures, the map of pairs $(R_{k+1},\,E_{k+1})\to\big(R_k,\,L_k\cup A_k)$ given by $p_k$ induces 
an isomorphism in homology.
So in the commutative diagram
\[\begin{CD}
F_{k+1}@>>>H_n(R_{k+1},\,E_{k+1})\\
@VVV@VVV\\
F_k@>>>H_n(R_k,\,L_k\cup A_k)
\end{CD}\]
the right vertical arrow is an isomorphism and the bottom horizontal arrow is trivial.
Hence the top horizontal arrow is also trivial.
Therefore $F_{k+1}$ lies in the image of $H_n(E_{k+1})$.
Then each generator $\alpha_i\in F_{k+1}$ is the image of some $\beta_i\in H_n(E_{k+1})$, which
can be represented by a finite simplicial cycle.
Hence there exists a compact subpolyhedron $A_{k+1}\subset E_{k+1}$ such that 
$F_{k+1}$ lies in the image of $H_n(A_{k+1})$.

Since $A_{k+1}\subset E_{k+1}$, we have $p_k(A_{k+1})\subset L_k\cup A_k$.
Consequently $p_k\big(p_{k+1}(A_{k+2})\big)\subset p_k(L_{k+1}\cup A_{k+1})\subset K_k\cup p_k(A_{k+1})$.
Let $B_k=A_k\cup p_k(A_{k+1})$.
Then \[p_k(B_{k+1})=p_k(A_{k+1})\cup p_k\big(p_{k+1}(A_{k+2})\big)\subset K_k\cup p_k(A_{k+1})\subset K_k\cup B_k.\]
Finally, let $Q_k=K_k\cup B_k$.
Then $p_k(Q_{k+1})=p_k(K_{k+1})\cup p_k(B_{k+1})\subset K_k\cup B_k=Q_k$. 
On the other hand, $Q_k=K_k\cup A_k\cup p_k(A_{k+1})$ is a compact subpolyhedron of $R_k$, and $F_k$ lies 
in the image of $H_n(Q_k)$ since $A_k\subset Q_k$.
\end{proof}
\end{proof}

\section{Topology of $\lim^2$}

\subsection{Vanishing of $\lim^2$} \label{lim2van}

By a well-known result of R. Goblot, $\lim^{p+2} G_\alpha=0$ for any inverse system of abelian groups indexed 
by a directed set of cofinality $\le\aleph_p$ (see \cite{Mard} or \cite{Je}).
In particular, the Continuum Hypothesis (in the presence of the Axiom of Choice) implies the vanishing of 
$\invlim^p G_\alpha$ for $p\ge 3$ for any inverse system of abelian groups $G_\alpha$ indexed by a directed set 
of cofinality at most continuum.

\begin{lemma} Let $X$ be a separable metrizable space.

(a) The set of all compact subsets of $X$ has cardinality at most continuum.

(b) $X$ has an ANR expansion of cardinality at most continuum.
\end{lemma}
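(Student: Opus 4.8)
The plan is to exploit that a separable metrizable space is second countable, so I would fix once and for all a countable base $\mathcal{B}=\{B_1,B_2,\dots\}$ for $X$. Both parts then reduce to the single observation that there are at most $2^{\aleph_0}$ subfamilies of $\mathcal{B}$, and $2^{\aleph_0}$ is exactly the cardinality of the continuum.

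For part (a), I would first bound the number of open sets and then pass to closed (hence compact) ones. Each open $U\subset X$ is the union of the subfamily $\mathcal{B}_U:=\{B\in\mathcal{B}\mid B\subset U\}$, and since $U=\bigcup\mathcal{B}_U$, the assignment $U\mapsto\mathcal{B}_U$ is an injection of the collection of open subsets of $X$ into the power set $2^{\mathcal{B}}$. Hence $X$ has at most $2^{\aleph_0}$ open subsets, and by taking complements at most $2^{\aleph_0}$ closed subsets. As $X$ is Hausdorff, every compact subset is closed, so the set of compact subsets of $X$ injects into the set of closed subsets and therefore has cardinality at most $2^{\aleph_0}$, i.e.\ at most continuum.

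For part (b), I would exhibit a small cofinal family of covers. Given any open cover $C$ of $X$, for each $x\in X$ I choose an element $U\in C$ with $x\in U$ and then a basic set $B\in\mathcal{B}$ with $x\in B\subset U$; the collection of all such $B$ is an open cover of $X$ whose elements all lie in $\mathcal{B}$ and which refines $C$. Thus the covers consisting of members of $\mathcal{B}$ --- equivalently, the covering subfamilies of $\mathcal{B}$ --- are cofinal in the directed set of all open covers ordered by refinement. There are at most $2^{\aleph_0}$ subfamilies of $\mathcal{B}$, so this cofinal family has cardinality at most the continuum, and hence so does its image, the family of nerves of these covers, which is cofinal in the directed set of nerves. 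Therefore that directed set has cofinality at most continuum.

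The arguments are elementary, and I expect no serious obstacle; the only points deserving care are, in (a), the verification that $U=\bigcup\mathcal{B}_U$ so that $U\mapsto\mathcal{B}_U$ is genuinely injective, and in (b), the two facts that basic covers really do refine every open cover and that cofinality of the covers transfers to cofinality of the associated nerves (a surjection onto the index set of the nerves cannot raise the cofinality).
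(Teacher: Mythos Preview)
Your proof is correct, but it proceeds quite differently from the paper's. The paper embeds $X$ in the Hilbert cube $I^\infty$ and, for (a), encodes each compact subset by the sequence of minimal subcomplexes containing its images in finer and finer cubulations of $[0,1]^i$; since each level contributes only finitely many choices, this gives an injection into a countable product of finite sets. For (b), the paper observes that open neighborhoods of the image $g(X)$ in $I^\infty$ are in bijection with their (compact) complements, so there are at most continuum many, and these neighborhoods supply a cofinal family for the \v Cech system.

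Your argument is more elementary and entirely intrinsic: you never leave $X$, using only second countability to bound the open sets (hence the closed and compact ones) and to produce a cofinal family of covers by basic sets. This avoids the Hilbert-cube embedding and any appeal to shape-theoretic facts about neighborhoods in ANRs, at the cost of being less connected to the geometric machinery used elsewhere in the paper. Both routes are short; yours is the one a reader without the ambient-embedding picture in mind would more readily reconstruct.
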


\begin{proof}
Let $Q_i$ be the standard cubulation of the cube $[0,2^i]^i$ into $(2^i)^i$ cubes with integer vertices.
Every compact subset of the Hilbert cube $[0,1]^\infty$ can be identified with the inverse limit of the minimal
subcomplexes $K_i$ of the cubulations $2^{-i}Q_i$ of $[0,1]^i$ containing the image of $X$.
The finite cubical complex $2^{-i}Q_i$ has only finitely many distinct subcomplexes, and it follows that 
the set of compact subsets of the Hilbert cube $I^\infty$ has cardinality at most continuum.
(In fact, precisely continuum, since there are always at least two options for each $K_i$, after 
$K_1,\dots,K_{i-1}$ have been selected.)
Since there exists an embedding $g$ of $X$ in $I^\infty$, we immediately get (a).
Also the set of open neighborhoods of $g(X)$ has cardinality at most continuum, since their complements are
compact subsets of $I^\infty$.
This implies (b).
\end{proof}

\begin{corollary}
If $X$ is a separable metrizable space and $i\ge 0$, the following statements cannot be disproved in ZFC:

(a) $\lim^p H_i(P_\beta)=0$ for $p\ge 3$, where the $P_\beta$ form an ANR expansion of $X$;

(b) $\lim^p H^i(K_\alpha)=0$ for $p\ge 3$, where $K_\alpha$ run over all compact subsets of $X$.
\end{corollary}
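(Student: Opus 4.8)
The plan is to reduce everything to the Continuum Hypothesis. Recall that a statement cannot be disproved in ZFC precisely when it is consistent with ZFC, and that by G\"odel's theorem ZFC $+$ CH is consistent whenever ZFC is (the relevant statements hold, for instance, in the constructible universe $L$). It therefore suffices to prove (a) and (b) under the additional assumption of CH.

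Assume CH, under which the continuum equals $\aleph_1$. First I would observe that the two inverse systems in question are indexed by directed sets of cofinality at most continuum. For (b) this is immediate from part (a) of the preceding Lemma: the family of compact subsets $K_\alpha$ of $X$ has cardinality at most continuum, and since a directed set is cofinal in itself, its cofinality is bounded by its cardinality. For (a) it is exactly the content of part (b) of the preceding Lemma.

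Next I would invoke the consequence of Goblot's theorem recorded above: under CH one has $\lim^p G_\alpha=0$ for all $p\ge 3$ whenever the inverse system $G_\alpha$ is indexed by a directed set of cofinality at most continuum. Applying this with $G_\beta=H_i(N_\beta)$ yields (a), and with $G_\alpha=H^i(K_\alpha)$ yields (b). This establishes both statements under CH, and hence shows that neither can be disproved in ZFC.

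There is no substantive obstacle here: the argument merely assembles the preceding Lemma, the quoted CH-consequence of Goblot's theorem, and the consistency of CH. The only points deserving a line of care are the logical equivalence between undisprovability and consistency together with the implication $\mathrm{Con}(\mathrm{ZFC})\Rightarrow\mathrm{Con}(\mathrm{ZFC}+\mathrm{CH})$, and the trivial observation that the cofinality of a directed set never exceeds its cardinality; neither presents any real difficulty.
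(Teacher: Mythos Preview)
Your argument is correct and is exactly the intended one: the paper states the Corollary without proof because it follows immediately from the preceding Lemma together with the CH-consequence of Goblot's theorem recorded just above it, via the consistency of CH. You have simply written out these two steps explicitly.
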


This suggests the slogan ``$\lim^3,\lim^4,\dots$ carry no geometrically relevant information''.

\begin{proposition} \label{lim2vanishing}
If $G_\alpha$ is an inverse system of abelian groups indexed by a directed set $\Lambda$ of cofinality 
$\le\aleph_p$ and such that $\lim G_\alpha$ surjects onto each $G_\alpha$, then $\lim^{p+1} G_\alpha=0$.
\end{proposition}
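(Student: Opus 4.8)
The plan is to deduce the result from the case of a constant inverse system together with the theorem of Goblot recalled above. First I would put $L:=\lim G_\alpha$ and let $\underline L$ denote the constant inverse system over $\Lambda$ with value $L$ and identity bonding maps. The projections $\pi_\alpha\:L\to G_\alpha$ constitute a morphism of inverse systems $\underline L\to(G_\alpha)$, and the hypothesis that $L$ surjects onto every $G_\alpha$ says exactly that this morphism is surjective in each degree. Writing $N_\alpha:=\ker\pi_\alpha$, the kernels form a sub-inverse-system of $\underline L$, and I obtain a short exact sequence of inverse systems indexed by $\Lambda$,
\[0\to(N_\alpha)\to\underline L\to(G_\alpha)\to 0.\]

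Next I would apply the long exact sequence of derived limits to this short exact sequence. The relevant fragment, in degrees $p+1$ and $p+2$, reads
\[\lim\nolimits^{p+1}\underline L\to\lim\nolimits^{p+1}G_\alpha\to\lim\nolimits^{p+2}N_\alpha,\]
so it suffices to prove that both outer terms vanish: the left-hand vanishing makes the map into $\lim^{p+1}G_\alpha$ zero, whence $\lim^{p+1}G_\alpha$ embeds into $\lim^{p+2}N_\alpha$, and the right-hand vanishing then forces $\lim^{p+1}G_\alpha=0$. The right-hand term is disposed of immediately by Goblot's theorem quoted above (\cite{Mard}, \cite{Je}): the system $(N_\alpha)$ is indexed by the same directed set $\Lambda$, of cofinality $\le\aleph_p$, so $\lim^{p+2}N_\alpha=0$.

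The crux, and the step I expect to be the main obstacle, is the vanishing $\lim^{p+1}\underline L=0$; this is precisely the point at which the surjectivity hypothesis buys one extra degree over Goblot's general bound. I would establish the stronger classical fact that $\lim^k\underline L=0$ for all $k\ge 1$ and every directed index set. The cleanest route is to identify $\lim^k\underline L$ with the cohomology $H^k$ of the nerve (order complex) of $\Lambda$ with coefficients in $L$, and to note that the nerve of a directed set is contractible: every finite subcomplex involves only finitely many indices, which admit a common strict upper bound $\beta$ (if $\Lambda$ has a maximal element it has a greatest one and the nerve is a cone; otherwise strict upper bounds always exist), and adjoining $\beta$ exhibits that subcomplex as a face of a cone with apex $\beta$. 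Thus every finite subcomplex lies in a contractible one, the nerve is weakly contractible, and $H^k(\mathrm{nerve};L)=0$ for $k\ge 1$. Feeding the two vanishings into the displayed fragment yields $\lim^{p+1}G_\alpha=0$, as required.
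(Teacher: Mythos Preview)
Your proof is correct and follows essentially the same route as the paper: form the short exact sequence $0\to N_\alpha\to\underline L\to G_\alpha\to 0$ with $L=\lim G_\alpha$, use the long exact sequence of derived limits together with the vanishing $\lim^i\underline L=0$ for $i>0$, and invoke Goblot's theorem to kill $\lim^{p+2}N_\alpha$. The only difference is that the paper simply asserts $\lim^i\underline L=0$ as known, whereas you spell out the nerve-contractibility argument for it.
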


In particular, for an inverse system of abelian groups $G_\alpha$ indexed by a directed set of cofinality 
at most continuum and such that $\lim G_\alpha$ surjects onto each $G_\alpha$, the vanishing of 
$\lim^2 G_\alpha$ cannot be disproved in ZFC.

In topology, the requirement that $\lim G_\alpha$ surject onto each $G_\alpha$ is natural for chains and
cochains, rather than homology and cohomology.
This will be discussed in more detail in the next subsection (\S\ref{conjectures}).

\begin{proof} Let $L=\lim G_\alpha$, and let $K_\alpha$ be the kernel of $p^\infty_\alpha\:L\to G_\alpha$. 
Then the short exact sequence of inverse systems
\[0\to K_\alpha\to L\to G_\alpha\to 0\]
yields $\lim^{p+1} G_\alpha\simeq\lim^{p+2} K_\alpha$ for each $p\ge 0$, due to $\lim^i L=0$ for all $i>0$.
Thus the assertion follows from Goblot's theorem.
\end{proof}

\begin{example} \label{sklyarenko}
As observed by Sklyarienko \cite{Sk84}*{Example 1.2}, an inverse system of abelian groups 
$G_\alpha$ indexed by a directed poset of cofinality continuum and such that $\lim G_\alpha$ surjects onto 
each $G_\alpha$ may have $\lim^1 G_\alpha\ne 0$ (in ZFC).

Clearly, $\Pi\bydef \prod_{n\in\N}\Z$ is the direct limit of its finitely generated subgroups.
It is well-known that every finitely generated subgroup of $\Pi$ is contained in a finitely generated 
direct summand of $\Pi$ (see \cite{Fu}*{Proof of Proposition 87.4 and Theorem 85.1}).
Hence $\Pi$ is also the direct limit of its finitely generated direct summands $F_\alpha$,
where any two, $F_\alpha$ and $F_\beta$, are contained in a third one, $F_\gamma$.
Then $\Hom(\Pi,\Z)$ is the inverse limit of the groups $G_\alpha\bydef \Hom(F_\alpha,\Z)$. 
Since each inclusion $F_\alpha\to\Pi$ is a split injection, each map $\Hom(\Pi,\Z)\to\Hom(F_\alpha,\Z)$
is a split surjection (in particular, a surjection).
Since the $F_\alpha$ are finitely generated free abelian groups, 
$\lim^1 G_\alpha\simeq\Ext(\colim F_\alpha,\,\Z)\simeq\Ext(\Pi,\Z)$.
But it is well-known that $\Ext(\Pi,\Z)\ne 0$ (see references in \cite{M00}*{Remark \ref{book:UCF}(2)}).
\end{example}

\begin{example} \label{kuzminov}
Kuz'minov constructed (long before \cite{Sk84}) an inverse system of abelian groups $K_\alpha$ indexed 
by a directed poset of cofinality continuum such that $\invlim^2 K_\alpha\ne 0$ (in ZFC) \cite{Ku}*{\S10} 
(see also \cite{Sk84}*{Remark after Theorem 1.4}).

Namely, in the notation of the previous example (Example \ref{sklyarenko}),
$K_\alpha=\Hom(\Pi/F_\alpha,\,\Z)$.
The short exact sequence of direct systems $0\to F_\alpha\to\Pi\to\Pi/F_\alpha\to 0$ yields
a short exact sequence of inverse systems $0\to\Hom(\Pi/F_\alpha,\Z)\to\Hom(\Pi,\Z)\to\Hom(F_\alpha,\Z)\to 0$
(the exactness in the last term is due to each $F_\alpha$ being a direct summand in $\Pi$),
which can also be written as $0\to K_\alpha\to\Sigma\to G_\alpha\to 0$, where
$\Sigma=\bigoplus_{n\in\N}\Z$.
Since $\lim^i\Sigma=0$ for all $i>0$, we have $\lim^2 K_\alpha\simeq\lim^1 G_\alpha\ne 0$.
\end{example}

\begin{remark} \label{kuzminov+}
The groups $K_\alpha$ of the previous example (Example \ref{kuzminov}) can be equivalently described as 
arbitrary direct summands of the group $\Sigma=\bigoplus_{n\in\N}\Z$ with finitely generated complements.

Indeed, if $H$ is a subgroup of a group $G$, let $H^*$ be the subgroup of $\Hom(G,\Z)$ consisting of all 
homomorphisms whose kernel contains $H$.
The quotient map $G\to G/H$ induces an injection $\Hom(G/H,\,\Z)\to\Hom(G,\Z)$ whose image is clearly $H^*$.
If $H$ is a direct summand of $G$, then 
$\Hom(G,\Z)\simeq\Hom\big(H\oplus (G/H),\,\Z\big)\simeq\Hom(H,\Z)\oplus\Hom(G/H,\,\Z)\simeq\Hom(H,\Z)\oplus H^*$, 
so $H^*$ is a direct summand of $\Hom(G,\Z)$.
If $H$ is a finitely generated direct summand, then $H^*$ has a finitely generated complement, namely, 
$\Hom(H,\Z)$.
Conversely, if $H$ is a direct summand with finitely generated complement, that is, $G/H$ is finitely generated, 
then $H^*\simeq\Hom(G/H,\,\Z)$ is finitely generated.

Let $\Phi\:G\to\Hom\big(\Hom(G,\,\Z),\,\Z\big)$ be the natural inclusion given by $\Phi(g)(f)=f(g)$.
If $g\in H$ and $f\in H^*$, then $\Phi(g)(f)=0$.
Thus $\Phi(H)\subset H^{**}$.
Now assume that $\Phi$ is an isomorphism (which is indeed the case for $G=\Sigma,\Pi$ \cite{Fu}*{Corollary 94.6}),
that $H$ is a direct summand in $G$, and that every $g\notin H$ lies in a direct summand $F$ containing $H$
as a direct summand of $F$ and such that $F/H$ is free abelian.
(The latter hypothesis trivially holds in the case $G=\Sigma$, by setting $F=G$; whereas in the case where 
$G=\Pi$ and $H$ is finitely generated we use that the subgroup generated by $H$ and $g$ is contained in 
a finitely generated direct summand $F$, and that $G=H\oplus C$ and $H\subset F$ imply $F=H\oplus (F\cap C)$.)
Then for each $g\notin H$ there exists an $f\in H^*$ such that $f(g)\ne 0$.
Thus $\Phi(g)\notin H^{**}$.
Since $\Phi$ is an isomorphism, we conclude that $\Phi(H)=H^{**}$. 
\end{remark}

\begin{remark} \label{kuzminov++}
It is not hard to see that Kuz'minov's inverse system $\{K_\alpha\}$ (see Example \ref{kuzminov} and 
Remark \ref{kuzminov+}) is cofinal in the inverse system $\{K_\beta\}$ of all subgroups $K_\beta\subset\Sigma$ 
such that $\Sigma/K_\beta$ is finitely generated, and consequently $\lim^2 K_\beta=\lim^2 K_\alpha\ne 0$.

Indeed, let $G\subset\Sigma$ be such that $\Sigma/G$ is finitely generated.
Since the quotient map $q\:\Sigma\to\Sigma/G$ is surjective, there exists a finite $S\subset\Sigma$ 
such that $q(S)$ generates $\Sigma/G$.
Let $B$ be a set of free generators of $\Sigma$.
Since each element of $S$ is a finite $\Z$-linear combination of elements of $B$, we have 
$S\subset\left<A\right>$ for some finite $A\subset B$.
Then $q(A)$ also generates $\Sigma/G$.
Hence for each $g\in\Sigma$ we have $q(g)=q(a_g)$ for some $a_g\in\left<A\right>$.
Let $H=\left<x-a_x\mid x\in B\but A\right>$.
Then $q(H)=\{0\}$ and so $H\subset G$.
On the other hand $\Sigma=H\oplus\left<A\right>$, where $A$ is finite.

Let us also note that since $\{K_\alpha\}$ is cofinal in $\{K_\beta\}$ and its indexing poset is directed,
the indexing poset of $\{K_\beta\}$ is also directed.
\end{remark}

\begin{example} \label{lim2example}
There exists a separable metrizable space $X$ that is a union of a directed collection of 
its compact subsets $X_\beta$ such that $\lim^2 H^1(X_\beta)\ne 0$.
(It remains unclear whether $\lim^2 H^1(K)\ne 0$ over all compact $K\subset X$.)

Let $\Sigma$ be a countable infinite set.
Let $Q$ be the product $\prod_{s\in\Sigma} D^2$ of copies of the $2$-disc.
The linear map $D^2=c*S^1\to 0*1=[0,1]=I$ (extending the constant maps $c\to 0$ and $S^1\to 1$) yields a map 
$\pi\:Q\to I^\infty$ onto $I^\infty=\prod_{s\in\Sigma} I$.
Of course, both $Q$ and $I^\infty$ are copies of the Hilbert cube. 

Every subset $A\subset\Sigma$ can be identified with a vertex $|A|$ of $I^\infty$, namely the one given
by the indicator function of $A$, that is, $|A|=(t_s)_{s\in\Sigma}$, where $t_s=1$ if $s\in A$ and 
$t_s=0$ if $s\notin A$.
If $C=(A_1\subset\dots\subset A_n)$ is a nonempty finite chain of subsets of $\Sigma$, let $|C|$ denote
the convex hull of $\{|A_1|,\dots,|A_n|\}$ in $I^\infty$.
If $S$ is a collection of subsets of $\Sigma$, let $|S|$ denote the union of $|C|$'s over all nonempty
finite chains $C=(A_1\subset\dots\subset A_n)$ with each $A_i$ taken from $S$.
If $S$ is a finite set of cardinality $m$, then $|S|$ is a triangulated $m$-dimensional cube.
(Compare the geometric realization of a poset in \cite{M3}.)  

For each $A\subset\Sigma$ let $T^A$ denote the subset $\prod_{i\in A} S^1\times\prod_{i\notin A} c$ of $Q$.
Clearly, $T^A=\pi^{-1}(|A|)$.
Let us note that any point $(z_a)_{a\in A}\in T^A$ can be identified with a map $A\to S^1$, namely 
the one given by $a\mapsto z_a$. 
If $B$ is a subset of $A$, we have the natural projection $\pi^A_B\:T^A\to T^B$, which sends 
any map $f\:A\to S^1$ into its restriction $f|_B\:B\to S^1$.

Given a nonempty finite chain $C=(A_1\subset\dots\subset A_n)$ of subsets of $\Sigma$, let $T^C=\pi^{-1}(|C|)$.
For the two-element chain $C=(B\subset A)$, it is easy to see that $T^C$ is homeomorphic to
the mapping cylinder $\cyl\big(\pi^A_B\big)$.
In general, it is not hard to see that $T^C$ is homeomorphic to the iterated mapping cylinder 
$\cyl(\pi^{A_n}_{A_{n-1}},\dots,\pi^{A_2}_{A_1})$.
(The iterated mapping cylinder $\cyl(f_1,\dots,f_n)$ of a sequence of composable maps 
$X_0\xr{f_1}\dots\xr{f_n}X_n$ is defined inductively as the mapping cylinder of the composition 
$\cyl(f_1,\dots,f_{n-1})\xr{p} X_{n-1}\xr{f_n} X_n$, where $p$ is the natural projection.)
If $S$ is a collection of subsets of $\Sigma$, let $T^S=\pi^{-1}(|S|)$.
Clearly, $T^S$ is the union of $T^C$'s over all nonempty finite chains $C=(A_1\subset\dots\subset A_n)$ 
with each $A_i$ taken from $S$.

From now on, we assume that $\Sigma$ is in fact the countably generated free abelian group $\Sigma=\bigoplus_\N\Z$.
If $G$ is a subgroup of $\Sigma$, then the topological group $\Hom(G,S^1)$ can be regarded as a subset of the
set of maps $G\to S^1$ and so gets identified with a subset $R^G$ of $T^G$.
If $G$ is freely generated by a set $A\subset G$, then the projection $\pi^G_A\:T^G\to T^A$ sends $R^G$ 
homeomorphically onto $T^A$.
If $F$ is a subgroup of $G$, then the projection $\pi^G_F\:T^G\to T^F$ restricts to a map
$\rho^G_F\:R^G\to R^F$ (since the restriction of a homomorphism to a subgroup is a homomorphism).
For the two-element chain $C=(F\subset G)$, let $R^C\subset T^C$ be the mapping cylinder $\cyl(\rho^G_F)$.
In general, given a nonempty finite chain $C=(G_1\subset\dots\subset G_n)$ of subgroups of $\Sigma$, 
let $R^C\subset T^C$ be the iterated mapping cylinder $\cyl(\rho^{G_n}_{G_{n-1}},\dots,\rho^{G_2}_{G_1})$.
If $S$ is a collection of subgroups of $\Sigma$, let $X^S\subset T^S$ be the union of $X^C$'s over all 
nonempty finite chains $C=(G_1\subset\dots\subset G_n)$ with each $G_i$ taken from $S$.

Finally, let $W$ be the collection of all direct summands $K_\beta$ of $\Sigma$ such that $\Sigma/K_\beta$ 
is finitely generated.
Given a $K_\beta\in W$, let $V_\beta\subset W$ consist of all $K_\beta$'s that contain $K_\beta$.
Then each $X_\beta\bydef R^{V_\beta}$ is compact, and $X\bydef R^W$ is the union of all $R^{V_\beta}$'s.

We claim that each $R^{V_\beta}$ deformation retracts onto $R^{K_\beta}$.
Indeed, given a closed subset $A\subset X$ whose inclusion in the space $X$ is a cofibration (or equivalently 
$A$ is a neighborhood deformation retract of $X$) and a continuous map $f\:X\to Y$, then its mapping cylinder
$\cyl(f)$ deformation retracts onto the mapping cylinder $\cyl(f|_A)$ of the restriction $f|_A\:A\to Y$.
It follows that the iterated mapping cylinder $\cyl(f_1,\dots,f_n)$ of a sequence of composable maps 
$X_0\xr{f_1}\dots\xr{f_n}X_n$ deformation retracts onto the union of all its iterated mapping subcylinders 
$\cyl(f_{i_1},\dots,f_{i_k})$ excluding $\cyl(f_1,\dots,f_n)$ itself and one ``free face''
$\cyl(f_1,\dots,f_{n-1})$.

An inductive application of this elementary deformation retraction yields a deformation retraction of
each $R^{V_\beta}$ onto $R^{K_\beta}$.
Namely, the triangulated cube $|V_\beta|$ is a simplicial cone with cone vertex $|K_\beta|$.
Hence there is a simplicial collapse of $|V_\beta|$ onto $|K_\beta|$, with all intermediate complexes being
subcones of $|V_\beta|$.
By echoing each elementary simplicial collapse in this sequence with an elementary deformation retraction as above, 
we get the desired deformation retraction of $R^{V_\beta}$ onto $R^{K_\beta}$.

If $G$ is a subgroup of $\Sigma$, clearly $H^1(R^G)\simeq G$.
Moreover, if $F$ is a subgroup of $G$, then $\rho^G_F\:R^G\to R^F$ induces the inclusion $F\to G$ on $H^1$.
Thus each $H^1(R^{V_\beta})\simeq H^1(R^{K_\beta})\simeq K_\beta$, and if $K_\alpha$ is a subgroup of
$K_\beta$, the inclusion $R^{V_\beta}\subset R^{V_\alpha}$ induces the inclusion $K_\alpha\to K_\beta$ on $H^1$.
Hence $\lim^2 H^1(R^{V_\beta})\simeq\lim^2 K_\beta\ne 0$ by the previous example and remarks
(Example \ref{kuzminov} and Remarks \ref{kuzminov+}, \ref{kuzminov++}; the remarks are needed only to simplify 
the description of $R^W$).
\end{example}

\subsection{$\lim^2$ and $\tau$}\label{conjectures}

\begin{theorem} \label{lim2theorem}
Assume that 

(1) $\lim^1 G_\alpha=0$, where $G_\alpha$ is the Marde\v si\'c--Prasolov inverse system (see Example \ref{MP}) and

(2) $\lim^3 H_\beta=0$ for every inverse system $H_\beta$ of cofinality at most continuum, where each
$H_\alpha$ is an abelian group of cardinality at most continuum.

Let $X$ be a local compactum; let $K_i=\ker\big(H_i(X)\to qH_i(X)\big)$.
Then there is an exact sequence
\[0\to p^1 H_{i+1}(X)\to K_i\to pH_i(X)\xr{\tau_n^X} qH_i(X)\to q^2H_{i+1}(X)\to K_{i-1}\to q^1H_i(X)\to 0.\]
\end{theorem}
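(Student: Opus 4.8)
The plan is to split the seven-term sequence into a formal left half, coming from the Milnor sequence, and a right half extracted from a single $\lim^p$-spectral sequence for the nerve system. First I would record the easy part. By Corollary \ref{milnor-ses''} the natural map $\gamma_i\:H_i(X)\to qH_i(X)$ of Corollary \ref{epi-epi} factors as $H_i(X)\xrightarrow{\beta_i}pH_i(X)\xrightarrow{\tau_i}qH_i(X)$, where $\beta_i$ is onto with $\ker\beta_i=p^1H_{i+1}(X)$. Since $\ker\beta_i\subseteq\ker\gamma_i=K_i$ and $\beta_i$ is surjective, $\beta_i$ restricts to a surjection $K_i\to\ker\tau_i$ with kernel $p^1H_{i+1}(X)$. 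This already yields exactness of $0\to p^1H_{i+1}(X)\to K_i\to pH_i(X)\xrightarrow{\tau_i}qH_i(X)$ at its first three nodes, and records $\operatorname{im}\tau_i=\operatorname{im}\gamma_i$ and $\operatorname{coker}\tau_i=\operatorname{coker}\gamma_i$.

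The heart of the matter is a spectral sequence $E_2^{p,q}=q^pH_q(X)=\lim^p_\beta H_q(N_\beta)\Rightarrow H_{q-p}(X)$. I would build it from the chain complex $\mathcal{C}_*(X)=\lim_\beta C_*(N_\beta)$ of \cite{M-II}, whose homology is $H_*(X)$, by applying the Roos (double-complex) resolution of $\lim_\beta$; filtering by the resolution degree gives $E_2^{p,q}=\lim^p_\beta H_q(N_\beta)$, and the edge map $H_n(X)\to\lim_\beta H_n(N_\beta)=qH_n(X)$ is precisely $\gamma_n$. The main obstacle is that this spectral sequence a priori abuts to $H_*(\mathbb{R}\lim_\beta C_*(N_\beta))$, which agrees with $H_*(\mathcal{C}_*(X))=H_*(X)$ only when the inverse system of chain groups $\{C_q(N_\beta)\}_\beta$ is $\lim$-acyclic in positive degrees. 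This is where the hypotheses enter: by the cofinality lemma above the nerve system has cofinality and term-cardinality at most continuum, so assumption (2) together with Goblot's theorem kills $\lim^{\ge 3}$ of the chains; and I expect the surviving positive derived limits of $\{C_q(N_\beta)\}_\beta$ to reduce, via the $\bigsqcup$- and $\bigvee$-additivity of Theorem \ref{10.4} and the universality of the Mardešić--Prasolov example (Example \ref{MP}) among nerve chain complexes of locally compact separable metrizable spaces, to the single system $G_\alpha$, whose $\lim^1$ vanishes by assumption (1). Making this reduction precise --- in particular disposing of $\lim^2$ of the \emph{chains}, while the homology-level $\lim^2$, namely $q^2H_{i+1}(X)$, legitimately survives into the answer --- is the delicate point on which the whole argument rests.

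Granting the spectral sequence, assumption (2) forces $E_2^{p,q}=0$ for $p\ge 3$, leaving only the rows $p=0,1,2$; hence $E_3=E_\infty$ and the sole nonzero higher differential is $d_2\:qH_q(X)\to q^2H_{q+1}(X)$. Reading off the resulting three-step filtration of $H_n(X)$, the edge identification gives $\operatorname{im}\gamma_n=E_\infty^{0,n}=\ker\big(d_2\:qH_n(X)\to q^2H_{n+1}(X)\big)$, so that $K_n=\ker\gamma_n$ is the first filtration step, fitting into a short exact sequence $0\to\operatorname{coker}\big(d_2\:qH_{n+1}(X)\to q^2H_{n+2}(X)\big)\to K_n\to q^1H_{n+1}(X)\to 0$ whose outer terms are $E_\infty^{2,n+2}$ and $E_\infty^{1,n+1}$.

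Finally I would splice these pieces. The left half is the sequence from the first paragraph. At degree $i$ the differential $d_2\:qH_i(X)\to q^2H_{i+1}(X)$ has kernel $\operatorname{im}\gamma_i=\operatorname{im}\tau_i$ and image $\operatorname{coker}\tau_i$, supplying exactness at $qH_i(X)$. At degree $i-1$ the filtration sequence above reads $0\to\operatorname{coker}\big(d_2\:qH_i(X)\to q^2H_{i+1}(X)\big)\to K_{i-1}\to q^1H_i(X)\to 0$, so the map $q^2H_{i+1}(X)\to K_{i-1}$ (the quotient onto $\operatorname{coker}(d_2)$ followed by its inclusion) has kernel $\operatorname{im}(d_2)$ and image the kernel of the surjection $K_{i-1}\to q^1H_i(X)$. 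Exactness at each splicing node is then immediate from these identifications, and the assembled sequence is exactly $0\to p^1H_{i+1}(X)\to K_i\to pH_i(X)\xrightarrow{\tau_i}qH_i(X)\to q^2H_{i+1}(X)\to K_{i-1}\to q^1H_i(X)\to 0$, as claimed.
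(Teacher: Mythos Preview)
Your spectral-sequence framework is morally the same as the paper's argument, which unravels that spectral sequence by hand via the short exact sequences $0\to Z_i^N\to C_i^N\to B_{i-1}^N\to 0$ and $0\to B_i^N\to Z_i^N\to H_i^N\to 0$. But there are two genuine gaps, one of which you flag and one you do not.

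The flagged gap --- $\lim$-acyclicity of the chain groups --- is not closed by an additivity reduction. The paper works with the telescope chain complex of \cite{M-II}, whose indexing poset $\nu'$ is essentially $\N^\N$, and observes that the inverse system $C_i^N$ is \emph{literally} isomorphic to the Marde\v si\'c--Prasolov system; hypothesis (1) then gives $\lim^1 C_i^N=0$ outright. For $\lim^2$ the mechanism is different: since $\lim C_i^N\to C_i^{N_0}$ is split surjective (and hence $\lim B_i^N\to B_i^{N_0}$ is surjective), Proposition~\ref{lim2vanishing} converts $\lim^2 C_i^N$ and $\lim^2 B_i^N$ into $\lim^3$ of a kernel system, and hypothesis (2) kills that. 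You need this surjectivity-to-$\lim^3$ trick, not Theorem~\ref{10.4}, to dispose of $\lim^2$ of the chains.

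The unflagged gap is more serious for your strategy. You assert that hypothesis (2) forces $E_2^{p,q}=0$ for all $p\ge 3$, but (2) only gives $p=3$; Goblot's bound $\lim^{p+2}=0$ for cofinality $\le\aleph_p$ does not apply to continuum cofinality without CH. With columns $p\ge 4$ possibly nonzero, the outgoing differentials $d_2\:E_2^{2,q}\to E_2^{4,q+1}$, $d_3\:E_3^{1,q}\to E_3^{4,q+2}$, $d_4\:E_4^{0,q}\to E_4^{4,q+3}$, \dots\ can all be nonzero, so none of your identifications $E_\infty^{0,n}=\ker d_2$, $E_\infty^{1,n}=q^1H_n(X)$, $E_\infty^{2,n+1}=\operatorname{coker}d_2$ are justified. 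The paper avoids this entirely by never invoking the full spectral sequence: the explicit exact sequences need only $\lim^1 C=\lim^2 C=0$ and $\lim^2 B=\lim^3 B=0$ to produce the six-term sequence (V), and comparison with the compact analogue (where the index set is countable and $\lim^{\ge 2}$ vanishes automatically) then identifies $\lim^1 Z_{i+1}^N\simeq K_i$ and yields the seven-term sequence.
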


The proof is based on the chain complexes for Steenrod--Sitnikov homology from \cite{M-II}.
A part of the proof is similar to that of \cite{Md}*{Satz A}. 

\begin{proof}
Since $X$ is a local compactum, it is the limit of an inverse sequence $\dots\xr{p_1} P_1\xr{p_0} P_0$
of locally compact separable polyhedra and proper maps (see \cite{M00}*{Theorem \ref{book:isbell}}, see also
\cite{M00}*{Remark \ref{book:nerves and cubes}}).
Moreover, the $P_i$ come endowed with triangulations or cubulations $L_i$ such that each $p_i$ is simplicial 
or cubical as a map from $L_{i+1}$ to a certain subdivision of $L_i$.
Using the product cell structure of $L_i\x [i-1,\,i]$, we make the infinite mapping telescope
$P_{[0,\infty)}$ into a cell complex $L$ (that is, a cone complex whose cones are PL balls), see 
\cite{M00}*{Theorem \ref{book:telescope-theorem}}.
Thus $P_{[0,\infty]}=X\cup L$.

Let $\kappa'$ be the set of all mapping subtelescopes $K_{[0,\infty)}\subset P_{[0,\infty)}$ such that 
each $K_i$ is a finite subcomplex of $L_i$, and let $\bar\nu'$ be the set of all mapping subtelescopes 
$N_{[0,\infty)}\subset P_{[0,\infty)}$ such that each $N_i$ is a subcomplex of $L_i$ and $\lim N_i=\emptyset$.
Let $\nu'=\{\Cl(L\but F)\mid F\in\bar\nu'\}$.

For a subcomplex $A$ of $L$ and a subcomplex $B$ of $A$ let 
\[\resizebox{\linewidth}{!}{
\begin{minipage}{\linewidth}
\begin{alignat*}{4}
C_*^{\kappa'}(A,B)&\ =\ \colim_{K\in\kappa'}&C_*^\infty(A\cap K,\,B\cap K)&\ \simeq\ 
\colim_{K\in\kappa'}\ \lim_{N\in\nu'}\,C_*\big(A\cap K,\,(B\cap K)\cup (A\cap K\cap N)\big),\\
C_*^{\nu'}(A,B)&\ =\ \lim_{N\in\nu'}&C_*\big(A,\,B\cup (A\cap N)\big)&\ \simeq\ 
\lim_{N\in\nu'}\ \colim_{K\in\kappa'}\,C_*\big(A\cap K,\,(B\cap K)\cup (A\cap K\cap N)\big).\\
\end{alignat*}
\end{minipage}}\]
Then $C_*^{\kappa'}(A,B)\simeq C_*^{\nu'}(A,B)$ and 
$H_i(X)\simeq H_{i+1}\big(C_*^{\nu'}(L,L_0)\big)$ \cite{M-II}.
%\cite{M-II}*{Corollary \ref{axi:telescope complexes} and Theorem \ref{axi:telescope computation}}.
Let us write $C_*^\infty=C_*^{\nu'}(L,L_0)$ and $C_*^N=C_*(L,\,N\cup L_0)$.
Then $C_*^\infty=\lim_{N\in\nu'}C_*^N$ and $H_i(X)\simeq H_{i+1}(C_*^\infty)$.

Given a $K\in\kappa'$, what was just said of $L=L_{[0,\infty)}$ applies to $K=K_{[0,\infty)}$ as well.
Thus writing $C_*^{\infty K}=C_*^{\nu'}(K,K_0)$ and $C_*^{NK}=C_*\big(K,\,(N\cap K)\cup K_0\big)$, we have 
$C_*^{\infty K}=\lim_{N\in\nu'} C_*^{NK}$ and $H_i(\lim K_i)\simeq H_{i+1}(C_*^{\infty K})$.

Let $Z_i^N=\ker\big(C_i^N\xr{\partial} C_{i-1}^N\big)$, $B_i^N=\im\big(C_{i+1}^N\xr{\partial} C_i^N\big)$ and
$H_i^N=H_i(C_*^N)=Z_i^N/B_i^N$.
Since $L$ deformation retracts onto $L_0$, we have $H_i(L,L_0)=0$ for each $i$.
Hence from the long exact sequence of the triple $(L,\,N\cup L_0,\,L_0)$
we get $H_i(N)\simeq H_{i+1}(L,\,N\cup L_0)=H_{i+1}^N$.
Therefore $qH_i(X)\simeq\lim_{N\in\nu'} H_i(N)\simeq\lim_{N\in\nu'}H_{i+1}^N$.
Similarly $q^kH_i(X)\simeq\lim^k_{N\in\nu'}H_{i+1}^N$.
 
The short exact sequences \[0\to Z_i^N\to C_i^N\to B_{i-1}^N\to 0,\]
\[0\to B_i^N\to Z_i^N\to H_i^N\to 0\]
give rise to the long exact sequences of derived functors

\[\dots\to\lim\nolimits^k C_i^N\to\lim\nolimits^k B_{i-1}^N\to
\lim\nolimits^{k+1} Z_i^N\to \lim\nolimits^{k+1} C_i^N\to\dots\tag{I}\]
\[\dots\to\lim\nolimits^k B_i^N\to\lim\nolimits^k Z_i^N\to
\lim\nolimits^k H_i^N\to\lim\nolimits^{k+1} B_i^N\to\dots\tag{II}\]
\smallskip

It is easy to see that the inverse system $C_i^N$ is isomorphic to the Marde\v si\'c--Prasolov inverse system
$G_\alpha$.
Hence by our hypothesis (1), $\lim^1 C_i^N=0$.
Moreover, each homomorphism $\lim C_i^N\to C_i^{N_0}$ is split surjective, and consequently
$\lim B_i^N\to B_i^{N_0}$ is also surjective.
Hence by the proof of Proposition \ref{lim2vanishing}, $\lim^2 C_i^N$ and $\lim^2 B_i^N$
are isomorphic to $\lim^3$ of some inverse systems of the same cofinality and of cardinality at most continuum.
Consequently, our hypothesis (2) implies $\lim^2 C_i^N=0$ and $\lim^2 B_i^N=\lim^3 B_i^N=0$.

Then a leftmost fragment of (I) reduces to 
\[0\to\lim Z_i^N\to\lim C_i^N\to\lim B_{i-1}^N\to\lim\nolimits^1 Z_i^N\to 0\tag{I$'$}\]
along with an isomorphism 
\[\lim\nolimits^1 B_{i-1}^N\simeq\lim\nolimits^2 Z_i^N.\tag{I$''$}\]
Similarly, a leftmost fragment of (III) similarly reduces to
\[0\to\lim B_i^N\to\lim Z_i^N\to\lim H_i^N\to
\lim\nolimits^1 B_i^N\to\lim\nolimits^1 Z_i^N\to\lim\nolimits^1 H_i^N\to 0\tag{II$'$}\]
along with an isomorphism
\[\lim\nolimits^2 Z_i^N\simeq\lim\nolimits^2 H_i^N.\tag{II$''$}\]

Let $Z_i^\infty=\ker\big(C_i^\infty\xr{\lim\partial} C_{i-1}^\infty\big)$,
$B_i^\infty=\im\big(C_{i+1}^\infty\xr{\lim\partial} C_i^\infty\big)$
and $H_i^\infty=Z_i^\infty/B_i^\infty$.
By the above $H_i(X)\simeq H_i(C_*^\infty)=H_{i+1}^\infty$.

The boundary map $\partial\:C_i^N\to C_{i-1}^N$ is the composition 
$C_i^N\twoheadrightarrow B_{i-1}^N\rightarrowtail C_{i-1}^N$ of a surjection and an injection.
Hence the boundary map $\lim\partial\:\lim C_i^N\to\lim C_{i-1}^N$ is of the form
$\lim C_i^N\xr{f}\lim B_{i-1}^N\rightarrowtail\lim C_{i-1}^N$.
Therefore $Z_i^\infty=\ker\lim\partial=\ker f$ and $B_{i-1}^\infty=\im\lim\partial\simeq\im f$.
Taking into account (I$'$), we get \[Z_i^\infty\simeq\lim Z_i^N\tag{III}\] and
a short exact sequence \[0\to B_{i-1}^\infty\to\lim B_{i-1}^N\to\lim\nolimits^1 Z_i^N\to 0.\tag{III$'$}\]

On the other hand, the third isomorphism theorem $(G/K)/(H/K)\simeq G/H$ for abelian groups $K\subset H\subset G$
yields a short exact sequence
\[0\to\lim B_i^N/B_i^\infty\to\lim Z_i^N/B_i^\infty\to\lim Z_i^N/\lim B_i^N\to 0,\tag{IV}\]
where the inclusions $B_i^\infty\subset\lim B_i^N\subset\lim Z_i^N$ follow from (II$'$) and (III$'$).
Using (III) and (III$'$) we can rewrite (IV) as
\[0\to\lim\nolimits^1 Z_{i+1}^N\to H_i^\infty\to\lim Z_i^N/\lim B_i^N\to 0.\tag{IV$'$}\]
Combining (II$'$) with (IV$'$) and using the isomorphisms (I$''$) and (II$''$), we get
\[0\to\lim\nolimits^1 Z_{i+1}^N\to H_i^\infty\to\lim H_i^N\to
\lim\nolimits^2 H_{i+1}^N\to\lim\nolimits^1 Z_i^N\to\lim\nolimits^1 H_i^N\to 0.\tag{V}\]

We can also write the analogue of (V) for each compact $K\subset X$.
To this end let 
$Z_i^{NK}=\ker\big(C_i^{NK}\xr{\partial} C_{i-1}^{NK}\big)$, 
$B_i^{NK}=\im\big(C_{i+1}^{NK}\xr{\partial} C_i^{NK}\big)$ and
$H_i^{NK}=H_i(C_*^{NK})=Z_i^{NK}/B_i^{NK}$.
These now have a countable indexing poset, so, in particular, $\lim^2 H_i^{NK}=0$.
Also, by the above $q^kH_i(\lim K_i)\simeq\lim^k_{N\in\nu'}H_{i+1}^{NK}$.
Hence $p^k H_i(X)\simeq\colim_{K\in\kappa'}q^kH_i(\lim K_i)\simeq
\colim_{K\in\kappa'}\lim^k_{N\in\nu'}H_{i+1}^{NK}$.
We also have $H_i(X)\simeq\colim_{K\in\kappa'} H_i(\lim K_i)$ by the definition of Steenrod--Sitnikov homology.
Since $H_i(X)\simeq H_i(C_*^\infty)=H_{i+1}^\infty$ and 
$H_i(\lim K_i)\simeq H_{i+1}^{\infty K}\bydef H_{i+1}(C_*^{\infty K})$,
we get $H_i^\infty\simeq\colim H_i^{\infty K}$.

Upon passing to the direct limit over all compact $K\subset X$, we obtain a commutative diagram
\[\scalebox{0.98}{$\begin{CD}
0@>>>\colim\lim\nolimits^1 Z_{i+1}^{N K}@>>>\colim H_i^{\infty K}@>>>\colim\lim H_i^{N K}@>>>0\\
@.@VVV@VVV@VVV@VVV\\
0@>>>\lim\nolimits^1 Z_{i+1}^N@>>>H_i^\infty@>>>\lim H_i^N@>>>\lim\nolimits^2 H_{i+1}^N
\end{CD}$}
\tag{VI}\]
continued as
\[
\begin{CD}
0@>>>\colim\lim\nolimits^1 Z_i^{N K}@>>>\colim\lim\nolimits^1 H_i^{N K}@>>>0\\
@VVV@VVV@VVV@.\\
\lim\nolimits^2 H_{i+1}^N@>>>\lim\nolimits^1 Z_i^N@>>>\lim\nolimits^1 H_i^N@>>> 0
\end{CD} 
\tag{VI$'$}\]
By the above, the vertical map $\colim H_i^{\infty K}\to H_i^\infty$ in the diagram (VI) is an isomorphism.
Hence $\phi\:\colim\lim\nolimits^1 Z_{i+1}^{N K}\to\lim\nolimits^1 Z_{i+1}^N$ is an injection,
$\psi\:\colim\lim H_i^{N K}\to\lim H_i^N$ is a surjection onto the image of the horizontal map
$H_i^\infty\to\lim H_i^N$, and $\coker\phi\simeq\ker\psi$ (using the third 
isomorphism theorem $(G/K)/(H/K)\simeq G/H$).
From (VI$'$) we also have $\colim\lim\nolimits^1 Z_i^{N K}\simeq\colim\lim\nolimits^1 H_i^{N K}$.
Using these observations, we can replace the leftmost half of (V) with something different:
\begin{multline}
0\to\colim\lim\nolimits^1 H_{i+1}^{N K}\to\lim\nolimits^1 Z_{i+1}^N\to\colim\lim H_i^{N K}\to\lim H_i^N\\
\to\lim\nolimits^2 H_{i+1}^N\to\lim\nolimits^1 Z_i^N\to\lim\nolimits^1 H_i^N\to 0.\tag{VII}
\end{multline}
Using the notation of \S\ref{surjectivity}, (VII) can be rewritten as follows:
\begin{multline*}
0\to p^1 H_i(X)\to\lim\nolimits^1 Z_{i+1}^N\to pH_{i-1}(X)\to qH_{i-1}(X)\\
\to q^2H_i(X)\to\lim\nolimits^1 Z_i^N\to q^1H_{i-1}(X)\to 0.
\end{multline*}
Finally, the original exact sequence (V) implies that $\lim\nolimits^1 Z_{i+1}^N$ is isomorphic to
$K_{i-1}=\ker\big(H_{i-1}(X)\to qH_{i-1}(X)\big)$.
Thus we get 
\[\resizebox{\linewidth}{!}{
$0\to p^1 H_i(X)\to K_{i-1}\to pH_{i-1}(X)\to qH_{i-1}(X)
\to q^2H_i(X)\to K_{i-2}\to q^1H_{i-1}(X)\to 0.$}\]
\end{proof}

\begin{mainconjecture} \label{cjA}
If $X$ is a local compactum, and $N_\alpha$ are the nerves of its
open covers, then $\lim^2 H_n(N_\alpha)=0$ for each $n$.
\end{mainconjecture}

\begin{mainconjecture} \label{cjB}
If $X$ is a local compactum, then the homomorphism 
$\tau_n^X\:pH_n(X)\to qH_n(X)$ is surjective for each $n$.
\end{mainconjecture}

\begin{corollary} \label{lim2corollary}
Under the assumptions (1), (2) Conjecture \ref{cjA} implies Conjecture \ref{cjB}
and moreover that there is an exact sequence
\[0\to p^1 H_{i+1}(X)\to q^1H_{i+1}(X)\to pH_i(X)\xr{\tau_n^X} qH_i(X)\to 0.\]
\end{corollary}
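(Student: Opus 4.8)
The plan is to read off both conclusions directly from the seven-term exact sequence of Theorem \ref{lim2theorem}, exploiting the observation that Conjecture A is nothing but the vanishing of all the groups $q^2H_n(X)$. Indeed, by definition $q^2H_n(X)=\lim^2 H_n(N_\alpha)$ over the nerves $N_\alpha$ of the open covers of $X$, so Conjecture A asserts precisely that $q^2H_n(X)=0$ for every $n$. Since we assume (1) and (2), Theorem \ref{lim2theorem} applies and supplies, for each index, the exact sequence
\[0\to p^1 H_{i+1}(X)\to K_i\to pH_i(X)\xr{\tau_i(X)} qH_i(X)\to q^2H_{i+1}(X)\to K_{i-1}\to q^1H_i(X)\to 0,\]
where $K_j=\ker\big(H_j(X)\to qH_j(X)\big)$.

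First I would deduce Conjecture B. Substituting $q^2H_{i+1}(X)=0$ into this sequence at index $i$, exactness at $qH_i(X)$ forces the image of $\tau_i(X)$ to be the kernel of the now trivial map $qH_i(X)\to q^2H_{i+1}(X)$, that is, all of $qH_i(X)$; hence $\tau_i(X)$ is onto. Next I would identify $K_i$ with $q^1H_{i+1}(X)$ by invoking the \emph{same} sequence one index higher: at index $i+1$ its final three terms read $q^2H_{i+2}(X)\to K_i\to q^1H_{i+1}(X)\to 0$, and since $q^2H_{i+2}(X)=0$ by Conjecture A, the map $K_i\to q^1H_{i+1}(X)$ is an isomorphism.

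Finally I would splice these two facts. Using surjectivity of $\tau_i(X)$, the left portion of the index-$i$ sequence becomes the exact sequence
\[0\to p^1 H_{i+1}(X)\to K_i\to pH_i(X)\xr{\tau_i(X)} qH_i(X)\to 0,\]
and replacing $K_i$ by the isomorphic group $q^1H_{i+1}(X)$ gives the asserted
\[0\to p^1 H_{i+1}(X)\to q^1H_{i+1}(X)\to pH_i(X)\xr{\tau_i(X)} qH_i(X)\to 0.\]
The map $p^1 H_{i+1}(X)\to q^1H_{i+1}(X)$ produced this way is the composite of the injection $p^1 H_{i+1}(X)\hookrightarrow K_i$ with the above isomorphism, and one checks it to be the natural transformation $\tau_{i+1}^{(1)}(X)$. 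The only point demanding care is the index bookkeeping --- the identification $K_i\cong q^1H_{i+1}(X)$ is extracted from the sequence one degree above the one yielding $\tau_i(X)$ onto; beyond this shift there is no genuine obstacle, since all the homological substance has already been packaged into Theorem \ref{lim2theorem}.
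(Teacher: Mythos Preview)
Your proposal is correct and is essentially the intended argument: the paper states this as an immediate corollary of Theorem \ref{lim2theorem} without giving a separate proof, and the only substance is exactly what you do --- observe that Conjecture A is the statement $q^2H_n(X)=0$ for all $n$, which kills the middle term of the seven-term sequence (yielding surjectivity of $\tau_i$) and, read at index $i+1$, forces $K_i\xrightarrow{\sim} q^1H_{i+1}(X)$.
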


\subsection*{Acknowledgements}

I would like to thank J. Bergfalk and F. Pakhomov for useful remarks.

\subsection*{Disclaimer}

I oppose all wars, including those wars that are initiated by governments at the time when 
they directly or indirectly support my research. The latter type of wars include all wars 
waged by the Russian state in the last 25 years (in Chechnya, Georgia, Syria and Ukraine) 
as well as the USA-led invasions of Afghanistan and Iraq.

\end{document}